\definecolor{dmagenta}{rgb}{.5,0,.5} 
\definecolor{dred}{rgb}{.5,0,0} 
\definecolor{dgreen}{rgb}{0,.5,0} 
\definecolor{blue}{rgb}{0,0,0.5} 
\definecolor{black}{rgb}{0,0,0} 
\definecolor{vdgreen}{rgb}{0,.3,0} 
\definecolor{vdred}{rgb}{.3,0,0} 
\definecolor{red}{rgb}{1,0,0} 
\newcommand{\Q}{\F}
\newcommand{\F}{{\mathds{k}}}
\newcommand{\Z}{{\mathbb{Z}}}
\newcommand{\hairy}{\mathcal H}  %Notation for the hairy graph complex
\newcommand{\ext}{\bigwedge\nolimits}
\DeclareMathOperator{\Tr}{Tr}  % Trace map
\DeclareMathOperator{\SP}{Sp}  % symplectic group
\DeclareMathOperator{\Out}{Out} % Out
\newcommand{\Sym}[1]{\mathbb S_{#1}}
\newcommand{\sym}{\Sym}
\DeclareMathOperator{\GL}{GL}  % general linear group
\DeclareMathOperator{\Aut}{Aut}  % Aut
\DeclareMathOperator{\Mod}{Mod}  % Mapping class groups
\newcommand{\SF}[1]{{\mathbb S}_{#1}}  % Schur functor
\newcommand{\cT}{\mathcal T}
\newcommand{\la}{\langle}
\newcommand{\ra}{\rangle}
\newcommand{\newtrace}{\Tr^{\mathrm C}}
\newcommand{\gspace}{\Omega }
\newtheorem{proposition}{Proposition}[section]
\newtheorem{theorem}[proposition]{Theorem}
\newtheorem{corollary}[proposition]{Corollary}
\theoremstyle{remark}
\theoremstyle{definition}
\newtheorem{definition}[proposition]{Definition}
\newtheorem{remark}[proposition]{Remark}
\newtheorem{conjecture}[proposition]{Conjecture}
\newtheoremstyle{red}{3pt}{3pt}{\color{red}}{}{\itshape}{.}{.5em}{}
\theoremstyle{red}
\title{The Johnson Cokernel and the Enomoto-Satoh invariant
}
\author{Jim Conant}
\begin{document}
\begin{abstract}
We study the cokernel of the Johnson homomorphism for the mapping class group of a surface with one boundary component. A graphical trace map simultaneously generalizing trace maps of Enomoto-Satoh  and Conant-Kassabov-Vogtmann is given, and using technology from the author's work with Kassabov and Vogtmann, this is is shown to detect a large family of representations which vastly generalizes series due to Morita and Enomoto-Satoh. The Enomoto-Satoh trace is the rank $1$ part of the new trace. The rank 2 part is also investigated.
\end{abstract}
\maketitle
\section{Introduction}
The Johnson homomorphism is an injective Lie algebra homomorphism $\tau\colon \mathsf J\to \mathsf D(H)$ \cite{johnson,morita}, where $\mathsf J$ is the associated graded Lie algebra coming from the Johnson filtration of the mapping class group $\Mod(g,1)$ and $\mathsf{D}(H)=\mathsf{D}(H_1(\Sigma_{g,1};\F))$ is a Lie algebra of ``symplectic derivations" of the free Lie algebra $\mathsf{L}(H)$.
It is an isomorphism in order $1$: $\mathsf J_1\cong \mathsf D_1(H)\cong \ext^3 H$, and in fact a theorem of Hain \cite{Hain} says that $\tau(\mathsf J)$ is generated as a Lie algebra by the order $1$ part $\ext^3 H$.
 In general, $\tau$ is not surjective and the Johnson cokernel $\mathsf C_s=\mathsf D_s(H)/ \tau (\mathsf J_s)$ is an interesting $\SP(H)$-module. (See Figure~\ref{fig:cokernel} for the known decomposition in low degrees.)
 
In Morita's 1999 survey article \cite{MoritaProspect}, he listed a series of problems that he felt were important future directions of research in the mapping class group. One of these problems was to determine exactly how $\mathsf J$ includes into $\mathsf D(H)$ as an $\SP$-module, and in particular, to characterize the cokernel of the Johnson homomorphism. One source of interest in this problem comes from number theory. Nakamura \cite{nakamura} showed that certain obstructions coming from the Galois group $\mathrm{Gal}(\overline{\mathbb Q}/\mathbb Q)$ appear in the cokernel in even orders $2k$. Deligne's motivic conjecture would imply that these obstructions appear with multiplicity given by the degree $k$ part of the free graded Lie algebra $\mathsf{L}(\sigma_3,\sigma_5,\sigma_7,\cdots)$ with one generator in each odd degree $\geq 3$. All of the representations coming from this so-called ``Galois obstruction" appear as the trivial $\SP$-representation $[0]_{\SP}$, giving an infinite family of cokernel obstructions. Morita \cite{morita} showed that representations $[k]_{\SP}$ appear in the cokernel for all odd $k\geq 3$, and more recently Enomoto and Satoh \cite{ES} showed that representations $[1^{4m+1}]_{\SP}$ appear in the cokernel as well.

In this paper, we introduce a new invariant for detecting the cokernel
$$\Tr^{\mathrm{C}}\colon \mathsf C_s \to \bigoplus_{r\geq 1}\gspace_{s+2-2r,r}(H)$$
which simultaneously generalizes the construction of Enomoto-Satoh \cite{ES} and of Conant-Kassabov-Vogtmann \cite{CKV}. (The superscript ``C" stands for ``cokernel.")
The space $\gspace_{s+2-2r,r}(H)$ is defined as a quotient of the dimension $1$ part of the hairy graph complex \cite{CKV} by certain relators, shown on the right of Figure~\ref{boundaries}. The set of relations is large enough so that $\Tr^{\mathrm C}$ vanishes on iterated brackets of order $1$ elements, but not so large as to project all the way down to the first homology of the hairy graph complex. The two indices $s+2-2r$ and $r$ refer to the \emph{number of hairs} and \emph{rank} of the graph, respectively.

The $r=1$ part $\gspace_{s,1}(H)$  is isomorphic to $[H^{\otimes s}]_{D_{2s}}$ and $\Tr^{\mathrm{C}}$ projects to the Enomoto-Satoh trace $\Tr^{\mathrm{ES}}\colon \mathsf{C}_s\to [H^{\otimes s}]_{D_{2s}}$. (Although their trace takes values in $[H^{\otimes s}]_{\Z_{s}}$, it possesses an extra $\Z_2$-symmetry.) 

\begin{figure}
 \begin{enumerate}
\item[] $\mathsf C_1=\mathsf C_2=0$
\item[] $\mathsf C_3= [3]_{\SP}$
\item[] $\mathsf C_4=[21^2]_{\SP}\oplus[2]_{\SP}$
\item[] $
\mathsf C_5=[5]_{\SP}\oplus[32]_{\SP}\oplus[2^21]_{\SP}\oplus[1^5]_{\SP}\oplus
2[21]_{\SP}\oplus2[1^3]_{\SP}\oplus
2[1]_{\SP}
$
\item[] $\mathsf C_6=2[41^2]_{\SP} \oplus [3^2]_{\SP} \oplus [321]_{\SP} \oplus [31^3]_{\SP} \oplus [2^21^2]_{\SP}
\oplus2[4]_{\SP} \oplus 3[31]_{\SP} \oplus 3[2^2]_{\SP} \oplus 3[21^2]_{\SP} \oplus 2[1^4]_{\SP} \oplus [2]_{\SP} \oplus 5[1^2]_{\SP} \oplus 3[0]_{\SP}$
 \end{enumerate}
 
\caption{The Johnson cokernel in low orders. \cite{MSS3}}\label{fig:cokernel}
 \end{figure}

 Let $H^{\la s\ra}\subset H^{\otimes s}$ be the intersection of the kernels of all the pairwise contractions $H^{\otimes s}\to H^{\otimes (s-2)}$.  Then there is a projection $\pi\colon\gspace_{s+2-2r,r}(H)\to \gspace_{s+2-2r,r}\la H\ra $ where the latter space is defined by ``taking coefficients in $H^{\la s+2-2r\ra}$."  
A theorem of \cite{CKV2} implies that the composition $\pi\circ\Tr^{\mathrm C}$ is onto.
  Considering the case $r=1$ gives us the following theorem.

\begin{theorem}
There is an epimorphism $\mathsf C_s\twoheadrightarrow [H^{\la s\ra}]_{D_{2s}}$ where the dihedral group acts on $H^{\otimes s}$ in the natural way, twisted by the nontrivial $\Z_2$ representation when $s$ is even.
\end{theorem}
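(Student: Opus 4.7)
The plan is to obtain the theorem essentially by extracting the $r=1$ component of the already-stated surjectivity result $\pi\circ\Tr^{\mathrm C}\twoheadrightarrow \bigoplus_r \gspace_{s+2-2r,r}\la H\ra$.  Since a surjection onto a direct sum remains a surjection when composed with any coordinate projection, the content of the theorem is just the identification
\[
\gspace_{s,1}\la H\ra \;\cong\; [H^{\la s\ra}]_{D_{2s}}
\]
with the correctly twisted dihedral action, together with the observation that $s+2-2r=s$ exactly when $r=1$.

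First, I would invoke the stated identification $\gspace_{s,1}(H)\cong [H^{\otimes s}]_{D_{2s}}$ arising from the fact that the $r=1$, one-loop hairy graphs are cyclic ``wheels'' with $s$ hairs, whose automorphism group is dihedral of order $2s$.  Taking coefficients in $H^{\la s\ra}\subset H^{\otimes s}$ (the intersection of kernels of all pairwise contractions) then gives $\gspace_{s,1}\la H\ra = [H^{\la s\ra}]_{D_{2s}}$ since $H^{\la s\ra}$ is $D_{2s}$-stable (all contractions are permuted among themselves by the symmetric group action).

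Next, I would pin down the $\Z_2$ twist for even $s$.  In the hairy graph complex, edges are oriented and the orientation of a wheel changes sign under the dihedral flip by a factor of $(-1)^{s+1}$ (reversing a length-$s$ cycle reverses the orientation of $s$ edges and the cyclic vertex ordering); when combined with how the coefficients in $H^{\otimes s}$ transform under the flip, one verifies that the net sign is trivial for $s$ odd and nontrivial for $s$ even.  This is precisely the ``extra $\Z_2$-symmetry'' alluded to in the remark that the Enomoto--Satoh trace, nominally valued in $[H^{\otimes s}]_{\Z_s}$, actually lands in $[H^{\otimes s}]_{D_{2s}}$.

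Finally, combining the surjectivity of $\pi\circ\Tr^{\mathrm C}$ (imported from \cite{CKV2}) with projection onto the $r=1$ summand yields the claimed epimorphism.  The main conceptual obstacle, namely proving surjectivity onto the ``traceless'' quotient, is done elsewhere and is invoked as a black box here; the only genuine work in this theorem is the careful bookkeeping of the dihedral action and its sign twist in the even case, which is a direct unwinding of definitions rather than a substantive computation.
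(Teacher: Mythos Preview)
Your proposal is correct and follows essentially the same route as the paper: the introductory theorem is obtained by combining the surjectivity corollary of Theorem~\ref{ckvthm} with the rank-one identification $\gspace_{s,1}(H)\cong [H^{\otimes s}]_{D_{2s}}$, then passing to $H^{\langle s\rangle}$-coefficients and projecting to the $r=1$ summand. One small correction to your sign bookkeeping: in the rank-one picture there is only \emph{one} external edge to reverse (contributing a single $-1$), and the remaining factor $(-1)^{s}$ comes from the AS relation at each of the $s$ trivalent vertices along the loop, not from reversing $s$ oriented edges.
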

This theorem vastly generalizes the known results for size $s$ representations in $\mathsf C_s$, which essentially consist of the two series due to Morita and Enomoto-Satoh described above, and of low order calculations.
We show in Theorem~\ref{thm:ESM} that both infinite  series are contained in $[H^{\la s\ra}]_{D_{2s}}$.  Comparing to computer calculations by Morita-Sakasai-Suzuki \cite{MSS3} shows that $[H^{\la s\ra}]_{D_{2s}}$ contains all size $s$ representations in $\mathsf C_s$ for $s\leq 6$, which is as far as calculated. A heuristic argument (see section \ref{reptheory}) shows that ``most" representations $[\lambda]_{\SP}$ appear in $[H^{\la s\ra}]_{D_{2s}}$.
In Theorems~\ref{thm:p} and \ref{thm:2p} explicit large infinite families of representations are constructed.

Turning now to $r\geq 2$, in a future paper \cite{CK}, we will show that there is an epimorphism $\gspace_{s+2-2r,r}(H)\twoheadrightarrow H^1(\Out(F_r);M_{s+2-2r,r})$ where $M_{s+2-2r,r}$ is a certain $\Out(F_r)$-module constructed from the tensor algebra $T(H)$. 
In the present paper, we give a description of $\gspace_{s-2,2}(H)$ in terms of generators and relations, and using this presentation to do computer calculations (see Theorem \ref{thm:omega2}), we find $\gspace_{s-2,2}\la H\ra$ for $s\leq 8$.

We finish the introduction by comparing our construction to the abelianization. Letting $\mathsf D^{\mathrm{ab}}_s(H)$ be the order $s$ part of the abelianization of $\mathsf D^+(H)$, Hain's theorem implies that $\mathsf C_s\twoheadrightarrow \mathsf D^{\mathrm{ab}}_s(H)$ for $s>1$. So the abelianization detects cokernel elements. 
A theorem of \cite{CKV2} implies that $\gspace_{s+2r-2,r}\la H\ra$ projects onto the rank $r$ part of the abelianization $\mathsf D^{\mathrm {ab}}_{s}(H)$, with the rank defined in the sense of \cite{CKV,CKV2}. The rank $1$ part of the abelianization consists of Morita's $[2m+1]_{\SP}$ for $m>1$, which does indeed appear in $\gspace_{2m+1,1}\la H\ra$ as noted above. The rank $2$ part of the abelianization consists of the following representations \cite{CKV2}:
 for all $k>\ell\geq 0$
  $$[2k,2\ell]_{\SP}\otimes \mathcal S_{2k-2\ell+2}\subset \mathsf D^{\mathrm{ab}}_{2k+2\ell+2}(H)$$ and $$[2k+1,2\ell+1]_{\SP}\otimes\mathcal M_{2k-2\ell+2}\subset \mathsf D^{\mathrm{ab}}_{2k+2\ell+4}(H),$$
where $\mathcal S_{w}$ and $\mathcal M_w$ are the vector spaces of weight $w$ cusp forms and modular forms respectively. Hence, these are detected by $\oplus_s\gspace_{s-2,2}\la H\ra$. 
However $\oplus_s\gspace_{s-2,2}\la H\ra$ contains a lot more, as the calculations of Theorem \ref{thm:omega2} indicate. In fact, we show in \cite{CK} that $\oplus_s\gspace_{s-2,2}(H)$ surjects onto
\begin{align*}
& \bigoplus_{k>\ell\geq 0} \mathcal S_{2k-2\ell+2}\otimes \left(\frac{\SF{(2k,2\ell)}(\mathsf{L})}{\mathrm{ad}(\mathsf{L})\cdot\SF{(2k,2\ell)}(\mathsf{L})}\right)\oplus\\
 & \bigoplus_{k>\ell\geq 0} \mathcal M_{2k-2\ell+2}\otimes \left(\frac{\SF{(2k+1,2\ell+1)}(\mathsf{L})}{\mathrm{ad}(\mathsf{L})\cdot\SF{(2k+1,2\ell+1)}(\mathsf{L})}\right)
\end{align*}
where $\mathsf L=\mathsf L(H)$ is the free Lie algebra on $H$ and $\mathrm{ad}(\mathsf{L})$ is the adjoint action of $\mathsf L$ on the Schur functor $\SF{\lambda}(\mathsf{L})$.
 The appearance of modular forms and the free Lie algebra $\mathsf L(H)$ in the Johnson cokernel provides yet another connection to number theory which is not yet fully understood.

{\bf Acknowledgements:} I'd like to thank the organizers Nariya Kawazumi and Takuya Sakasai of the 2013 workshop at the University of Tokyo on the Johnson Homomorphism for providing a fertile research environment. I'd also like to thank Naoya Enomoto for his talk and subsequent discussion, and Martin Kassabov for insightful comments and discussion. Finally I thank Jetsun Drolma for her inspiration in Yoyogi park. 

\section{Basic definitions}\label{sec2}
Fix a base field $\F$ of characteristic $0$. Let $\Sigma_{g,1}$ be a surface of genus $g$ with one boundary component. Throughout the paper we let $H=H_1(\Sigma_{g,1};\F)$, which is a symplectic vector space. We let $\la\cdot,\cdot\ra$ denote the symplectic form, and let $p_1,\ldots,p_g,q_1,\ldots, q_g$ be a symplectic basis. We say $\la v,w\ra$ is the \emph{contraction} of $v$ and $w$.
Let $\mathbb S_s$ be the symmetric group on $s$ letters and for the groups $G\in\{\SP(H),\GL(H),\mathbb S_s\}$, let $[\lambda]_{G}$ be the irreducible representation of $G$ corresponding to $\lambda$. 

We begin by defining the relevant Lie algebra which is the target of the Johnson homomorphism.

\begin{definition}
 Let $\mathsf{L}_k(H)$ be the degree $k$ part of the free Lie algebra on $H$. Define $\mathsf D_s(H)$ to be the kernel of the bracketing map $H\otimes \mathsf  L_{s+1}(H)\to \mathsf L_{s+2}(H)$. Let $\mathsf D(H)=\bigoplus_{s=0}^\infty \mathsf D_s(H)$ and $\mathsf D^+(H)=\bigoplus_{s\geq 1} \mathsf D_s(H)$. We refer to $s$ as the \emph{order} of an element of $\mathsf D(H)$.
\end{definition}
$H\otimes \mathsf L(H)$ is canonically isomorphic via the symplectic form to $H^*\otimes \mathsf L(H)$ which is isomorphic to the space of derivations $\mathsf {Der} (\mathsf L(H))$. Under this identification, the subspace $\mathsf D(H)$ is identified with $\mathsf {Der}_\omega(\mathsf L(H))=\{X\in \mathsf {Der}(H)\,|\, X\omega =0\}$ where $\omega=\sum [p_i,q_i]$. Thus $\mathsf D(H)$ is a Lie algebra with bracket coming from $\mathsf {Der}_\omega(H)$. 

There is another beautiful interpretation of this Lie algebra in terms of trees:
 \begin{definition}
  Let $\cT(H)$ be the vector space of unitrivalent trees where the univalent vertices are labeled by elements of $H$ and the trivalent vertices each have a specified cyclic order of incident half-edges, modulo the  standard AS, IHX and multilinearity relations.(See Figure~\ref{fig:multilin} for the multilinearity relation.) Let $\cT_k(H)$ be the part with $k$ trivalent vertices. Define a Lie bracket on $\cT(H)$ as follows. Given two labeled trees $t_1,t_2$, the bracket $[t_1,t_2]$ is defined by summing over joining a univalent vertex from $t_1$ to one from $t_2$, multiplying by the contraction of the labels.
\end{definition}
These two spaces $\mathsf{D}_s(H)$ and $\cT_s(H)$ are connected by a map $\eta_s\colon \cT_s(H)\to H\otimes \mathsf L_{s+1}(H)$ defined   
by $\eta_s(t)=\sum_x \ell(x)\otimes t_x$ where the sum runs over univalent vertices $x$, $\ell(x)\in H$ is the label of $x$, and $t_x$ is the element of $\mathsf L_{s+1}(H)$ represented by the labeled rooted tree formed by removing the label from $x$ and regarding $x$ as the root. The image of $\eta_s$ is contained in $\mathsf D_s(H)$ and gives an isomorphism $\cT_s(H)\to \mathsf D_s(H)$ in this characteristic $0$ case \cite{Levine}.

\begin{figure}
$$
\begin{minipage}{2cm}
\includegraphics[width=2cm]{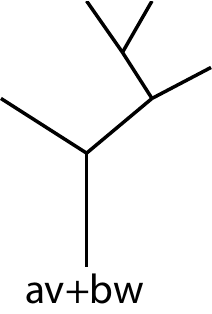}
\end{minipage}
=
a
\begin{minipage}{2cm}
\includegraphics[width=2cm]{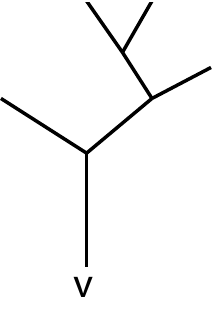}
\end{minipage}
+b
\begin{minipage}{2.7cm}
\includegraphics[width=2.7cm]{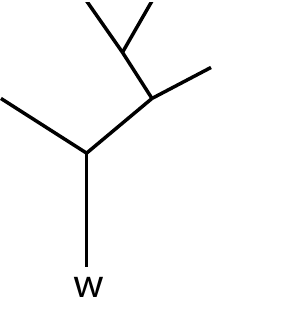}
\end{minipage}
$$
\caption{Multilinearity relation in $\cT(H)$. Here $a,b\in\F$, $v,w\in V$}\label{fig:multilin}
\end{figure}

%Suppose $H$ is a symplectic vector space with symplectic form $\la\cdot,\cdot\ra$ and a symplectic basis $p_1,\ldots,p_n,q_1,\ldots, q_n$. The expression $\la v,w\ra$ is called the contraction of $v$ and $w$. The contraction of $p_i$ and $q_i$ is $1$, whereas the contraction of $q_i$ with $p_i$ is $-1$. All other contractions of basis elements are zero.

Now that we understand the target of the Johnson homomorphism, we review the construction of the homomorphism itself.
 Let $F=\pi_1(\Sigma_{g,1})$ be a free group on $2g$ generators and given a group $G$, let $G_k$ denote the $k$th term of the lower central series: $G_1=G$ and $G_{k+1}=[G,G_k]$. The Johnson filtration  $$\Mod(g,1)= \mathbb J_0 \supset \mathbb J_1\supset \mathbb J_2\cdots $$
of the mapping class group $\Mod(g,1)$ is defined by letting $\mathbb J_s$ be the kernel of the homomorphism $\Mod(g,1)\to \Aut(F/F_{s+1})$. The \emph{associated graded} $\mathsf J_s$ is defined by $\mathsf J_s=\mathbb J_s/\mathbb J_{s+1}\otimes \Q$. (The Johnson filtration is a central series, so that the groups $\mathsf J_k$ are abelian.) Let $\mathsf J=\bigoplus_{s\geq 1} \mathsf J_s$, where we refer to $s$ as the \emph{order} of the element. 

The group commutator on $\Mod(g,1)$ induces a Lie algebra structure on $\mathsf J$.

It is well-known that $\Mod(g,1)\cong \Aut_0(F)$ where $\Aut_0(F)=\{\varphi\in \Aut(F)\,|\, \varphi(\prod_{i=1}^g[p_i,q_i]=[p_i,q_i])$.

\begin{definition}
 The (generalized) Johnson homomorphism $\tau\colon \mathsf J\to \mathsf D^+(H)$ is defined as follows. Let $\varphi\in \mathbb J_s$. Then $\varphi$ induces the identity on $\Aut(F/F_{s+1})$. Hence for every $z\in F$, $z^{-1}\varphi(z)\in F_{s+1}$, and we can project to get an element $[z^-1\varphi(z)]\in F_{s+1}/F_{s+2}\otimes \F\cong \mathsf L_{s+1}(H)$.
 Define a map $\tau(\varphi)\colon H\to  \mathsf L_{s+1}(H)$ via $z\mapsto [z^-1\varphi(z)]$ where $z$ runs over the standard symplectic basis of $H$. By the various identifications, we can regard $\tau(\varphi)$ as being in $\mathsf L\otimes \mathsf L_{s+1}(H)$. 
 The fact that $\varphi$ preserves $\prod_{i=1}^g [p_i,q_i]$ ensures that $\tau(\varphi)\in\mathsf{D}_s(H)\subset \mathsf L\otimes \mathsf L_{s+1}(H)$.
 \end{definition}
 
 \begin{proposition}[Morita]
 The Johnson homomorphism $\tau \colon \mathsf J\to \mathsf D^+(H)$ is an injective homomorphism of Lie algebras.
 \end{proposition}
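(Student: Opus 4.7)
The plan is to verify in sequence: well-definedness on the associated graded, additivity, the bracket relation, and finally injectivity. Each step reduces to a computation in the lower central series $F_\bullet$ of $F=\pi_1(\Sigma_{g,1})$, using only the inclusion $[F_i,F_j]\subset F_{i+j}$ and the identification $F_{s+1}/F_{s+2}\otimes\F\cong \mathsf L_{s+1}(H)$ afforded by the Magnus/Witt theory.

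First I would show $\tau$ descends to the associated graded. If $\varphi\in\mathbb J_{s+1}$, then $z^{-1}\varphi(z)\in F_{s+2}$ for every $z$, so the class $[z^{-1}\varphi(z)]\in F_{s+1}/F_{s+2}\otimes\F$ is zero, so $\tau(\varphi)=0$. Moreover, the image lies in $\mathsf D_s(H)\subset H\otimes\mathsf L_{s+1}(H)$: applying the bracketing map $H\otimes \mathsf L_{s+1}(H)\to \mathsf L_{s+2}(H)$ to $\tau(\varphi)=\sum_i p_i\otimes[p_i^{-1}\varphi(p_i)] + q_i\otimes[q_i^{-1}\varphi(q_i)]$ computes the class of $\prod_i[p_i,\varphi(p_i)^{-1}]\cdot\varphi(\prod_i[p_i,q_i])\cdot(\prod_i[p_i,q_i])^{-1}$ modulo $F_{s+3}$ (after a short manipulation), and this class is $0$ because $\varphi\in\Aut_0(F)$ preserves $\prod[p_i,q_i]$.

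Next I would check additivity. For $\varphi,\psi\in\mathbb J_s$ and any $z\in F$, write
\[
z^{-1}(\varphi\psi)(z)=\bigl(z^{-1}\varphi(z)\bigr)\cdot\varphi\!\left(z^{-1}\psi(z)\right).
\]
Since $z^{-1}\psi(z)\in F_{s+1}$ and $\varphi$ is trivial on $F/F_{s+1}$, we have $\varphi(z^{-1}\psi(z))\equiv z^{-1}\psi(z)\pmod{F_{2s+1}}\subset F_{s+2}$ (using $s\ge 1$). So modulo $F_{s+2}$ the product is $(z^{-1}\varphi(z))\cdot(z^{-1}\psi(z))$, which abelianizes to $\tau(\varphi)+\tau(\psi)$. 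For the Lie bracket, I would apply the same expansion to $[\varphi,\psi]$ at order $s+t$, using that $\varphi$ acts trivially modulo $F_{s+1}$ and $\psi$ trivially modulo $F_{t+1}$; the leading contribution of $[\varphi,\psi](z)z^{-1}$ in $F_{s+t+1}/F_{s+t+2}$ is given by the commutator $[\tau(\varphi)(z),\tau(\psi)(z)]$ plus a symmetric correction that matches the bracket on $\mathsf D^+(H)$ (the bracket induced from $\mathsf{Der}_\omega(\mathsf L(H))$). This is the heart of the argument and where I expect the main obstacle: one must bookkeep the commutator identities carefully enough to recognize the output as the derivation bracket rather than merely a commutator in $\mathsf L(H)$.

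Finally, injectivity is immediate from the definition of the Johnson filtration. If $\tau(\varphi)=0$ for $\varphi\in\mathbb J_s$, then $z^{-1}\varphi(z)\in F_{s+2}$ for every element of the symplectic basis, hence (by multiplicativity of $z\mapsto z^{-1}\varphi(z)$ modulo $F_{s+2}$) for all $z\in F$. Thus $\varphi$ acts trivially on $F/F_{s+2}$, so $\varphi\in\mathbb J_{s+1}$ and represents $0$ in $\mathsf J_s$. Combining the four steps gives the proposition.
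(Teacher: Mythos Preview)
The paper does not supply a proof of this proposition; it is stated as a known result attributed to Morita (with reference \cite{morita}) and then used, so there is no argument in the paper to compare against. Your outline is the standard one and is essentially correct. The well-definedness, additivity, and injectivity steps are clean as written; for injectivity you are implicitly using that $F_{s+1}/F_{s+2}$ is torsion-free, so vanishing after $\otimes\,\F$ forces vanishing integrally, hence $\varphi\in\mathbb J_{s+1}$.

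The only place you leave something genuinely unfinished is the bracket step, and your phrasing there is slightly off: the leading term of $z^{-1}[\varphi,\psi](z)$ in $F_{s+t+1}/F_{s+t+2}$ is not the Lie-algebra commutator $[\tau(\varphi)(z),\tau(\psi)(z)]$ in $\mathsf L(H)$ but rather $\tau(\varphi)\bigl(\tau(\psi)(z)\bigr)-\tau(\psi)\bigl(\tau(\varphi)(z)\bigr)$, i.e.\ the derivation bracket evaluated at $z$. Extracting this from the group-commutator expansion uses exactly the filtration fact you already invoked (that $\varphi\in\mathbb J_s$ acts as the identity on $F_k/F_{s+k}$), applied now to the elements $z^{-1}\psi(z)\in F_{t+1}$ and $z^{-1}\varphi(z)\in F_{s+1}$; once that is written out carefully the identification with the bracket on $\mathsf{Der}_\omega(\mathsf L(H))$ is immediate.
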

 
 The main object of study of this paper is \emph{the Johnson cokernel:}
 $$\mathsf C_s=\mathsf D_s(H)/\tau(\mathsf J_s).$$ 
More precisely, we are interested in the stable part of the cokernel and we always assume that $2g=\dim (H)\gg s$.

\section{The construction}
We recall from \cite{CKV} the definition of the hairy Lie graph complex and the trace map. The hairy graph complex $C_k\hairy(H)$ is defined as the vector space with basis given by certain types of decorated graphs modulo certain relations. 

We begin by describing the generators. Start with a union of $k$ unitrivalent trees with specified cyclic orders at each trivalent vertex. Then
 join several pairs of univalent vertices by edges, which  are called \emph{external edges}. (One can think of the trees and added edges as being different colors. We will use the convention that external edges are dashed.) 
  The univalent vertices of the trees that were not paired by edges are each labeled by an element of the vector space $H$. Such a graph is called a \emph{hairy graph}.
Hairy graphs have an \emph{orientation},
 which is defined as a bijection of the trees with the numbers $1$ to $k$ and a direction on each external edge. 
 
 The relations are: 
 \begin{enumerate}
\item IHX within trees,
\item AS within trees, 
\item multilinearity on labels of univalent vertices, 
\item switching an edge's direction gives a minus sign, 
\item renumbering the trees gives the sign of the permutation. 
\end{enumerate}
These last two types of relations explain how changing the decorations of the graph switches the orientation. Informally $C_k\hairy(H)$ is the space you get by joining $k$ elements of $\cT(H)$ by several external edges and giving the resulting object an orientation in the above sense.

The boundary operator $\partial:C_k\hairy(H)\to C_{k-1}\hairy(H)$ is defined on hairy graphs by summing over joining pairs of trees along external edges. The sign and induced orientation are fixed by the convention that contracting a directed edge from tree $1$ to tree $2$ induces the orientation where all edge directions are unchanged, the tree formed by joining tree 1 and 2, is numbered $1$ and all other tree numbers are reduced by $1$.

In \cite{CKV}, we showed that the abelianization $\mathsf D^{\mathrm {ab}}(H)$ embeds in $H_1(\hairy(H))$ via a map which we now define. First, define an operator $T\colon C_k\hairy(H)\to C_k\hairy(H)$  by summing over adding an external edge  to all pairs of univalent vertices of a hairy graph, fixing the direction arbitrarily and multiplying by the contraction of the two labels. Also define a natural inclusion $\iota\colon \ext^k \cT(H)\to C_k\hairy(H)$ by regarding $t_1\wedge\cdots\wedge t_k$ as a union of trees with no external edges. The ordering from the wedge converts to a numbering of the trees as required for the orientaion in $C_k\hairy(H)$. Now we can define the trace map from \cite{CKV}.
\begin{definition}
The trace map $\Tr^{\mathrm{CKV}}\colon \ext^k \cT(H)\to C_k\hairy(H)$ is defined as $\Tr^{\mathrm{CKV}}=\exp(T)\circ\iota$.
\end{definition}
Unpacking the definition, the trace map $\Tr^{\mathrm{CKV}}$ adds several external edges to a hairy graph in all possible unordered ways. In \cite{CKV}, $\Tr^{\mathrm{CKV}}$ is shown to be a chain map, which is injective on homology, so induces an injection from the abelianization to $H_1(\hairy(H))$. 

Now to define $\newtrace$, consider the subspace $S_2\subset C_2\hairy(H)$ consisting of an order $1$ tree (tripod) which is connected by two or three of its hairs to the other tree, or has two of its hairs joined by an edge, and the third edge is connected to the other tree. The other tree may have edges connecting it to itself.

\begin{definition}
The target of $\newtrace$ is defined as $\gspace(H)=C_1\hairy(H)/(\partial(S_2)+\iota(\cT(H)))$.
\end{definition}
The $\iota(\cT(H))$ term is to eliminate graphs without any edges. Notice that by definition $\gspace(H)$ surjects onto the part of $H_1(\mathcal H_H)$ with at least one edge. See Figure~\ref{boundaries} for a depiction of the three types of relations coming from $\partial(S_2)$. The first kind says that an isolated loop is zero. The second kind says that one can slide a hair along an external edge. The third kind is more complicated, but does not appear until there are at least two external edges attached.
\begin{figure}
\begin{enumerate}
\item $$\partial \begin{minipage}{2cm}\includegraphics[width=2cm]{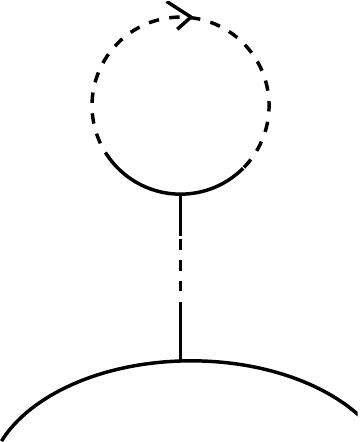}\end{minipage}=\begin{minipage}{2cm}\includegraphics[width=2cm]{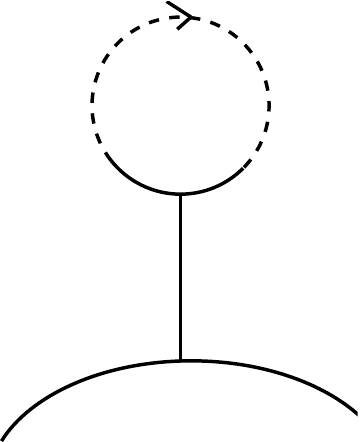}\end{minipage}$$
\item $$\partial \begin{minipage}{2.5cm}\includegraphics[width=2.5cm]{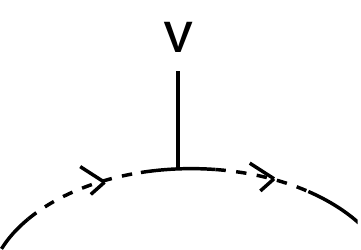}\end{minipage}=\begin{minipage}{2.5cm}\includegraphics[width=2.5cm]{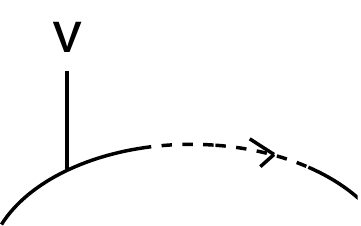}\end{minipage}-\begin{minipage}{2.5cm}\includegraphics[width=2.5cm]{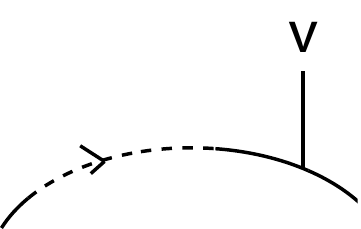}\end{minipage}$$
\item $$\partial \begin{minipage}{2cm}\includegraphics[width=2cm]{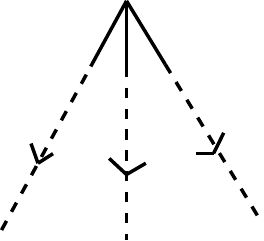}\end{minipage}=\begin{minipage}{2cm}\includegraphics[width=2cm]{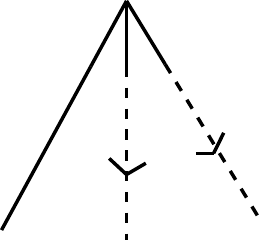}\end{minipage}+\begin{minipage}{2cm}\includegraphics[width=2cm]{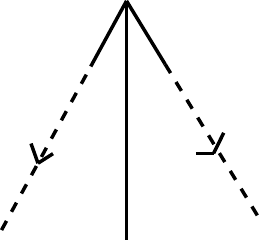}\end{minipage}
+\begin{minipage}{2cm}\includegraphics[width=2cm]{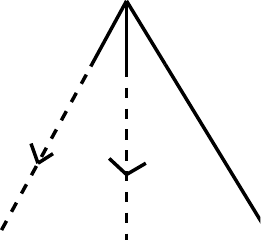}\end{minipage}$$
\end{enumerate}
\caption{Relations in $\gspace(H)$.}\label{boundaries}
\end{figure}

Now we have all the necessary definitions to define the new trace map:

\begin{definition}
Define $\newtrace\colon \cT(H)\to \gspace(H)$ by the composition 
$$
\xymatrix{
\cT(H)\ar@/_2pc/[rr]^{\newtrace}\ar[r]^{\Tr^{\mathrm{CKV}}} & C_1\hairy(H)\ar@{->>}[r] & \gspace(H)
}
$$
\end{definition}

Next we show that $\newtrace$ is well-defined on the cokernel of the Johnson homomorphism.
\begin{theorem}
$\newtrace$ vanishes on the image of the Johnson homomorphism in orders $\geq 2$.
\end{theorem}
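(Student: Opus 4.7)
By Hain's theorem \cite{Hain}, $\tau(\mathsf{J})$ is generated as a Lie algebra by its order-$1$ piece $\tau(\mathsf{J}_1)\cong\ext^3 H=\cT_1(H)$, the space of tripods.  Consequently every element of $\tau(\mathsf{J}_s)$ with $s\geq 2$ is a sum of iterated brackets containing at least one tripod factor, and by the Jacobi identity it suffices to prove
\[
\newtrace([t,y])=0\quad\text{in}\ \gspace(H)
\]
for every tripod $t\in\cT_1(H)$ and every $y\in\cT(H)$.

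The main tool is the fact from \cite{CKV} that $\Tr^{\mathrm{CKV}}\colon\ext^\bullet\cT(H)\to C_\bullet\hairy(H)$ is a chain map, where the source carries the Chevalley--Eilenberg differential for the Lie algebra $\cT(H)\cong\mathsf{D}(H)$.  Restricting this identity to $\ext^2\cT(H)$ gives
\[
\partial\,\Tr^{\mathrm{CKV}}(t\wedge y)\;=\;\pm\,\newtrace([t,y])
\]
in $C_1\hairy(H)$, so the theorem reduces to showing that $\Tr^{\mathrm{CKV}}(t\wedge y)=\exp(T)(t\sqcup y)\in C_2\hairy(H)$ can be written as an element of $S_2$ plus a correction whose boundary lands in $\iota(\cT(H))$.

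I would attack this by stratifying the summands of $\exp(T)(t\sqcup y)$ by how the three hairs of the tripod $t$ are deployed.  Let $i$ denote the number of external edges joining $t$ to $y$, $j$ the number of self-loops internal to $t$, and $k$ the number of self-loops on $y$; the constraint that $t$ has three hairs forces $i+2j\leq 3$.  The summands with $(i,j)\in\{(2,0),(3,0),(1,1)\}$ --- exactly those in which every hair of $t$ is engaged --- comprise $S_2$ by definition, while summands with $i=0$ are annihilated by $\partial$.  The only remaining case is $(i,j)=(1,0)$: a tripod with two labelled hairs and a single cross-edge to $y$.  For $k=0$ the boundary is a single tree with no external edges, already in $\iota(\cT(H))$, and we are done.

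The hard part will be the residual subcase $(1,0,k)$ with $k\geq 1$, since $\partial$ then produces a combined tree carrying $k$ external self-loops inherited from $y$, and such a tree need not be a lollipop in general.  To finish I would leverage the three families of relations in Figure~\ref{boundaries} --- the lollipop relation, the slide relation, and the three-edge tripod relation, all coming from $\partial(S_2)$ --- to pair each $(1,0,k)$ summand with a suitable combination of $(2,0,k-1)$ and $(1,1,k-1)$ $S_2$-summands whose boundaries cancel it modulo those relations.  Bookkeeping this cancellation uniformly in $k$ and in the (arbitrary) shape of $y$ is the combinatorial heart of the argument; the relations defining $\gspace(H)$ appear to have been designed precisely so that this reduction goes through.
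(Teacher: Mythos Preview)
Your reduction in the first paragraph is too strong, and this is a genuine gap.  You claim that by Jacobi it suffices to prove $\newtrace([t,y])=0$ for \emph{every} $y\in\cT(H)$.  But that assertion is false: if $\newtrace$ vanished on $[t,y]$ for all $y$, then since $\cT_{\geq 2}(H)=[\cT_1(H),\cT_{\geq 1}(H)]$ by easy degree reasons, $\newtrace$ would vanish on all of $\cT_{\geq 2}(H)$ and detect nothing in the cokernel.  What Hain's theorem actually gives is that every element of $\tau(\mathsf J_s)$ for $s\geq 2$ is a sum of brackets $[t,y]$ with $y\in\tau(\mathsf J_{s-1})$; the inductive hypothesis you must carry is $\newtrace(y)=0$, not merely $y\in\cT(H)$.

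This is exactly why your ``hard part'' cannot close.  In your chain-map language, the boundary of the $(1,0,k)$ stratum is, up to a combinatorial factor, $[t,(\text{$k$-loop part of }\Tr^{\mathrm{CKV}}(y))]$, and summing over $k\geq 1$ gives $[t,\newtrace(y)]$ in $\gspace(H)$.  For generic $y$ this is nonzero, so no pairing with $S_2$-summands will cancel it.  The paper's proof confronts this head-on: it establishes the derivation-type identity
\[
\newtrace[t,X]=[t,\newtrace(X)]+[\newtrace(t),X]
\]
in $\gspace(H)$ (the terms you put into $S_2$ are precisely the cross-terms that vanish modulo the three relations of Figure~\ref{boundaries}), then checks separately that $\newtrace(t)=0$ for a tripod.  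The conclusion $\newtrace[t,X]=[t,\newtrace(X)]$ then feeds the induction on order.  Your chain-map framework can be salvaged to prove this same identity, but not the stronger vanishing you set out to show.
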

\begin{proof}
By Hain's theorem, it suffices to show that $\newtrace([t, X])=0$ if $t$ is of order $1$ and $\newtrace(X)=0$. Indeed, we claim the formula $$\newtrace[t,X]=[t,\newtrace(X)]+[\newtrace(t),X]$$ holds. Assume $t$ and $X$ are single trees. The terms of $\newtrace [t,X]$ come in two types. Those where the added external edges do not join $t$ and $X$ and those where 1 or 2 edges join $t$ and $X$. In the former case, we get the $[t,\newtrace(X)]+[\newtrace(t),X]$ part we are interested in. If one edge joins $t$ and $X$, we have the situation depicted in Figure~\ref{proof} (1). After applying the trace map, the two indicated terms differ by sliding a hair over an edge, so cancel in $\gspace(H)$.
If two hairs join, we have the situation depicted in Figure~\ref{proof} (2), which yields the third $\partial(S_2)$ relation. 

\begin{figure}
\begin{enumerate}
\item\begin{align*}
\left[\begin{minipage}{1.2cm}\includegraphics[width=1.2cm]{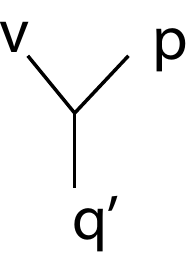}\end{minipage},\begin{minipage}{2.5cm}\includegraphics[width=2.5cm]{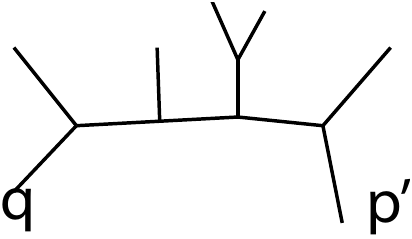}\end{minipage}\right]&=\begin{minipage}{2.5cm}\includegraphics[width=2.5cm]{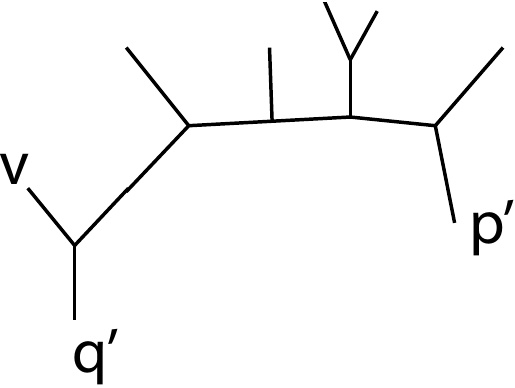}\end{minipage}-\begin{minipage}{2.5cm}\includegraphics[width=2.5cm]{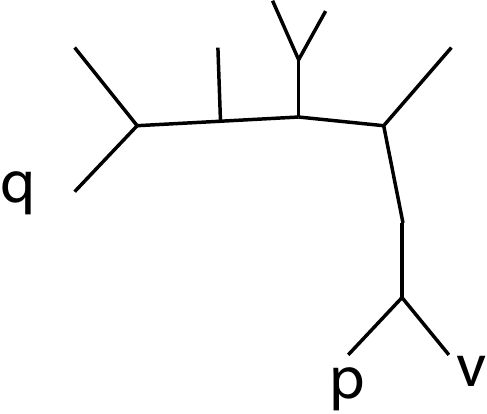}\end{minipage}+\cdots\\
&\overset{\newtrace}{\longrightarrow}\begin{minipage}{2.5cm}\includegraphics[width=2.5cm]{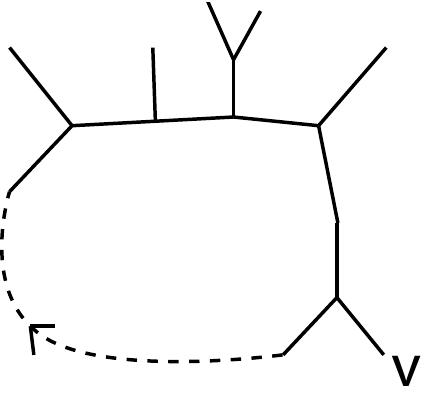}\end{minipage}-\begin{minipage}{2.7cm}\includegraphics[width=2.7cm]{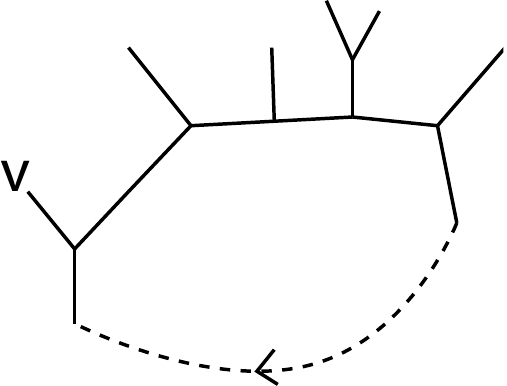}\end{minipage}+\cdots\\
&=\partial(S_2)+\cdots
\end{align*}
\item
\begin{align*}
\newtrace\left[\begin{minipage}{1.2cm}\includegraphics[width=1.2cm]{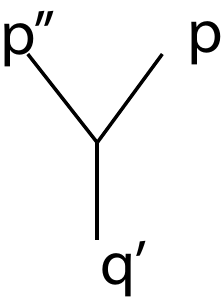}\end{minipage},\begin{minipage}{2.5cm}\includegraphics[width=2.5cm]{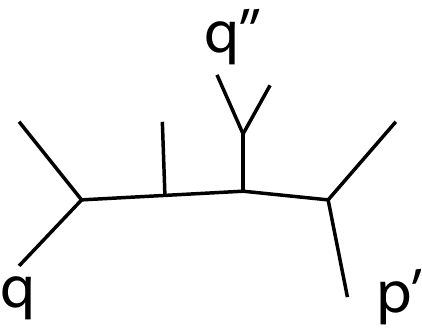}\end{minipage}\right]&=-\begin{minipage}{2.5cm}\includegraphics[width=2.5cm]{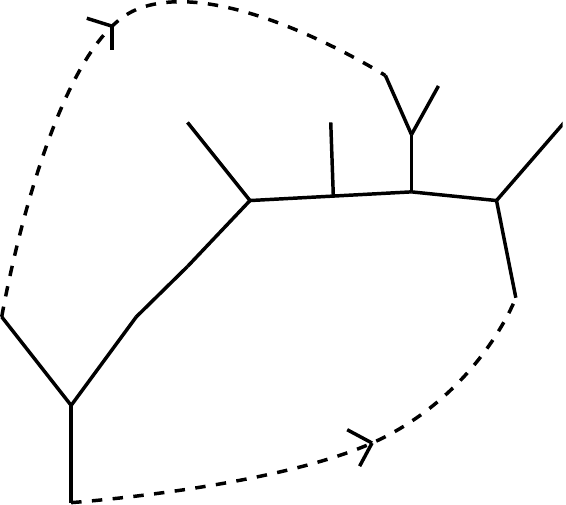}\end{minipage}-\begin{minipage}{2.5cm}\includegraphics[width=2.5cm]{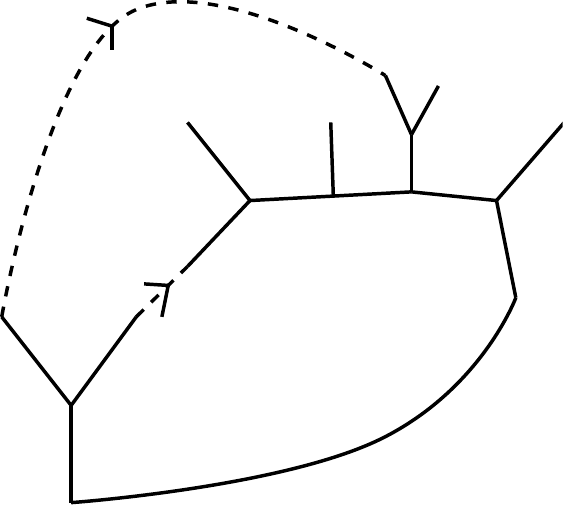}\end{minipage}-\begin{minipage}{2.5cm}\includegraphics[width=2.5cm]{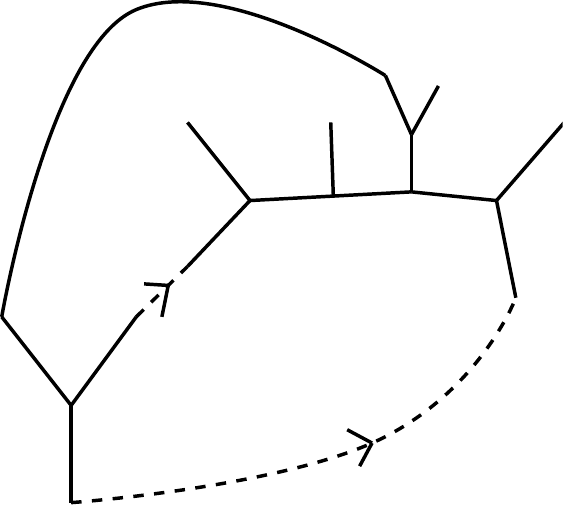}\end{minipage}+\cdots\\
&=\partial(S_2)+\cdots
\end{align*}
\end{enumerate}
\caption{Parts of $\newtrace[t,X]$ in $\partial(S_2)$.}\label{proof}
\end{figure}
So we have shown that $\newtrace[t,X]=[t,\newtrace(X)]+[\newtrace(t),X]$. 
 Now $\newtrace(t)$ is equal to $t$ plus terms where one edge is added. The $t$ is in $\iota(\cT(H))$ and therefore is zero. The second type of term is the first kind of $\partial (S_2)$ relation, so is zero.Thus $\newtrace[t,X]=[t,\newtrace X]$, which inductively shows that $\newtrace$ vanishes on iterated brackets of order $1$ elements. \end{proof}

\section{Comparison to the ES-trace}
The space of connected hairy graphs is graded by the first Betti number (rank) and also by number of hairs. Let $C_{1,r,s}\hairy(H)\subset C_1\hairy(H)$ and $S_{2,r,s}\subset S_2$ be the respective subspaces generated by graphs with $b_1=r$ and $s$ hairs. Define $\gspace_{s,r}(H)=C_{1,r,s}\hairy(H)/\partial S_{2,r,s}$. Then $$\gspace(H)=\bigoplus_{s\geq 0,r\geq 1}\gspace_{s,r}(H)$$

In the next theorem we identify $\gspace_{s,1}(H)$ with the target of the Enomoto-Satoh trace.
\begin{theorem}
There is an isomorphism $\gspace_{s,1}(H)\cong [H^{\otimes s}]_{D_{2s}}$ for $s>1$.
\end{theorem}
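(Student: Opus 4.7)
The plan is to identify $\gspace_{s,1}(H)$ with the classical Enomoto--Satoh target by first showing that \emph{wheels}---cycles of length $s$ carrying one labeled hair at each of their $s$ trivalent vertices---span the space, and then computing the residual symmetries. An Euler-characteristic count shows that a connected rank-$1$ hairy graph with $s$ hairs has exactly $s$ trivalent interior vertices, $s$ univalent (hair) vertices, and $2s$ edges, so topologically it is a single embedded cycle with some subtrees hanging off it.

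\textbf{Step 1 (reduction to wheels).} Modulo $\partial S_{2,1,s}$ together with the internal IHX, AS, and multilinearity relations already present in $C_1\hairy(H)$, every generator of $C_{1,1,s}\hairy(H)$ is a linear combination of wheels. I induct on the number of tree edges not lying on the embedded cycle. The key reduction move is the second relation of Figure~\ref{boundaries}: one may slide a hair across an external edge. Applying IHX inside any dangling subtree to expose a leaf adjacent to the cycle and then invoking the hair-slide progressively absorbs off-cycle tree structure into the cycle; the first $\partial S_2$ relation (isolated self-loops are zero) disposes of the stray loops produced along the way, and the third relation handles the residual configurations in which two external edges meet a common tripod. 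In the end one is left only with wheels whose cycle edges may be tree edges or external edges, and these two presentations of the wheel are already identified by an IHX expansion of the ``caterpillar'' tree used to realize the wheel.

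\textbf{Step 2 (wheels and $D_{2s}$).} Fix a reference wheel with a chosen starting vertex and direction of traversal; a labeling then gives a tensor $h_1\otimes\cdots\otimes h_s\in H^{\otimes s}$, so we obtain a linear map $\Phi\colon H^{\otimes s}\to\gspace_{s,1}(H)$. Rotating the starting vertex realises the cyclic $\Z_s$-action on $H^{\otimes s}$, and reversing the direction of traversal realises the order-two reflection generator of $D_{2s}$. Tracking the orientation data---namely the combined effect of reversing the edge directions, simultaneously flipping the cyclic order at each of the $s$ trivalent vertices (each contributing a sign via AS), and renumbering the trees if one uses the $k$-tripod presentation of the wheel---gives exactly the sign twist in the $D_{2s}$-action stated by the theorem, with the nontrivial $\Z_2$ twist appearing precisely when $s$ is even.

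\textbf{Step 3 (isomorphism).} By Step~1 the wheels span $\gspace_{s,1}(H)$, so $\Phi$ is surjective. By Step~2 it factors through the coinvariants $[H^{\otimes s}]_{D_{2s}}$. Injectivity of the descended map is immediate: the inverse sends a labeled wheel to its sequence of labels read around the cycle, which is well-defined modulo exactly the $D_{2s}$ ambiguity of choice of starting vertex and direction.

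The main obstacle is Step~1. The difficulty is to organise the induction so that all topological types of rank-$1$ hairy graph---multiple trees joined by several external edges, external edges with both endpoints on the same tree, deeply nested subtree branchings, and combinations of these---are uniformly reduced by the three $\partial S_2$ relations, and so that each reduction step genuinely decreases the complexity measure without silently reintroducing off-cycle tree structure or additional external edges somewhere else in the graph. Once that bookkeeping is in hand, Steps~2 and~3 are essentially a sign calculation and a clean identification.
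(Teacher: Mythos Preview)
Your overall strategy is sound, but you have overcomplicated the picture and left a real gap in Step~3.

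First, the structure of $C_{1,1,s}\hairy(H)$ is much simpler than you assume. Since $k=1$ there is a \emph{single} tree, and since the rank is $1$ there is \emph{exactly one} external edge. There are no ``multiple trees joined by several external edges,'' and the third $\partial S_2$ relation (tripod with three external edges) never occurs in $S_{2,1,s}$: that configuration already has rank $\geq 2$. So your invocation of the triangle relation in Step~1 is vacuous. The paper exploits this: using \emph{only} the IHX/AS relations internal to the single tree (no $\partial S_2$ at all), one puts the tree into caterpillar form and obtains directly an isomorphism $C_{1,1,s}\hairy(H)\cong [H^{\otimes s}]_{\Z_2}$ at the chain level, the $\Z_2$ being the reflection with sign $(-1)^{s+1}$. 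Only \emph{then} does one pass to $\gspace_{s,1}(H)$: the slide relation contributes the cyclic symmetry, and the loop relation is shown to be redundant for $s>1$ via one IHX followed by a slide.

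Second, and more seriously, your Step~3 does not establish injectivity. Your proposed inverse ``read the labels around the wheel'' is only defined on wheels. To get a map out of $\gspace_{s,1}(H)$ you must define it on all of $C_{1,1,s}\hairy(H)$ (including non-wheel graphs) and then check that it annihilates every IHX relation and every element of $\partial S_{2,1,s}$. Knowing that wheels span does not tell you that the only relations \emph{among wheels} in $\gspace_{s,1}(H)$ are the dihedral ones; a priori, two different IHX reductions of the same graph to wheel form might differ by something strictly larger. The paper's organization avoids this entirely: because $C_{1,1,s}\hairy(H)\cong [H^{\otimes s}]_{\Z_2}$ is already an \emph{isomorphism}, it remains only to identify which further relations the $\partial S_2$ quotient imposes on $[H^{\otimes s}]_{\Z_2}$, and these are exactly the cyclic ones. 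That is what closes the gap you left open.
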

\begin{proof}
Notice that $C_{1,1,s}\hairy(H)$ is spanned by trees with two univalent vertices joined by an external edge. Using IHX relations, one gets a loop with s labeled hairs attached. Thus  $C_{1,1,s}\hairy(H)\cong [H^{\otimes s}]_{\Z_2}$ where the $\Z_2$ acts by reflecting the loop, and has sign $(-1)^{s+1}$. So it gives $v_1\otimes\cdots\otimes v_s\mapsto (-1)^{s+1}v_s\otimes\cdots\otimes v_1$. The slide relations have the effect: $v_1\otimes\cdots\otimes v_s=v_s\otimes v_1\otimes \cdots\otimes v_{s-1}$, giving us $[H^{\otimes s}]_{D_{2s}}$. The loop relation is a consequence of IHX and slide relations if $s>1$:
$$
\begin{minipage}{1.7cm}\includegraphics[width=1.7cm]{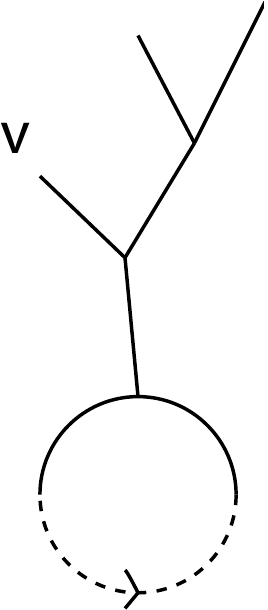}\end{minipage}=\begin{minipage}{2cm}\includegraphics[width=2cm]{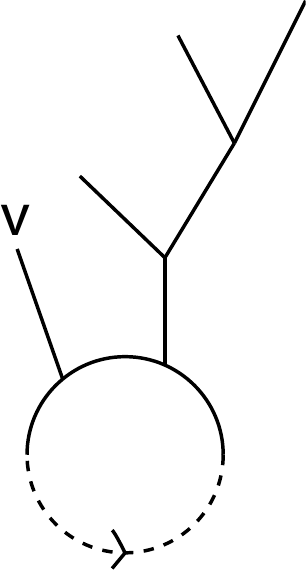}\end{minipage}-\begin{minipage}{1.7cm}\includegraphics[width=1.7cm]{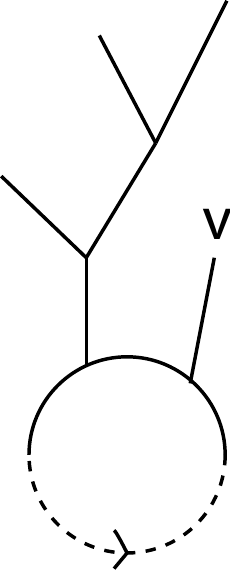}\end{minipage}
$$
Here any tree can, by IHX, be converted into one of the form $[v,X]$, where $v\in H$, so the picture is sufficiently general. Then the last two terms cancel by a slide relation.
\end{proof}

Next we show that $\newtrace$ projected to  $\gspace_{s,1}(H)$ coincides with the ES-trace. First we show that it possesses an additional $\Z_2$-symmetry.
\begin{theorem}\
\begin{enumerate}
\item Let $b\colon H^{\otimes s}\to H^{\otimes s}$ be defined by $b(v_1\otimes\cdots\otimes v_s)=(-1)^{s+1}v_s\otimes\cdots\otimes v_1$. Then
 $\Tr^{\mathrm{ES}}\colon \mathsf D_s(H)\to [H^{\otimes s}]_{\Z_s}$ satisfies $b\Tr^{\mathrm{ES}}=\Tr^{\mathrm{ES}}$. Therefore, without loss of information, $\Tr^{\mathrm{ES}}$ takes values in $[H^{\otimes s}]_{D_{2s}}$.
\item The following diagram commutes:
$$\xymatrix{
\cT_s(H)\ar[r]^{\newtrace}\ar@{>->>}[d]^\eta&\gspace(H)\ar@{->>}[d]\\
\mathsf D_s(H)\ar[r]^{\frac{1}{2}\Tr^{\mathrm{ES}}}&[H^{\otimes s}]_{D_{2s}}
}$$
\end{enumerate}
\end{theorem}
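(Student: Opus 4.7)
The strategy is to express both $\Tr^{\mathrm{ES}}\circ\eta$ and the composition of $\newtrace$ followed by projection to $[H^{\otimes s}]_{D_{2s}}$ as sums over pairs of univalent vertices of $t$, differing only in whether one sums over ordered or unordered pairs. Given $t\in\cT_s(H)$, the projection of $\newtrace(t)=\exp(T)\iota(t)$ to the rank-$1$ summand $\gspace_{s,1}(H)$ retains only the terms of $\exp(T)\iota(t)$ with exactly one external edge, namely $T(t)$, which by definition is a sum over \emph{unordered} pairs $\{x,y\}$ of distinct univalent vertices of $t$, scaled by $\la\ell(x),\ell(y)\ra$, with some arbitrarily chosen edge direction. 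Under the isomorphism $\gspace_{s,1}(H)\cong[H^{\otimes s}]_{D_{2s}}$ from the preceding theorem, each such term is read off as the cyclic word obtained by applying the IHX-straightening procedure to the resulting rank-$1$ graph.

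On the other hand, unwinding $\eta(t)=\sum_x\ell(x)\otimes t_x$ and the standard definition of $\Tr^{\mathrm{ES}}$ as contraction of the $H$-factor with the leading tensor slot of $t_x\in \mathsf L_{s+1}(H)\subset H^{\otimes(s+1)}$ produces, for each univalent vertex $x$ and each other univalent vertex $y$, a contribution of $\la\ell(x),\ell(y)\ra$ times a signed sum of tensor monomials in the remaining $s$ labels. A direct calculation identifies this sum, modulo cyclic permutations, with the same loop-with-hairs obtained by joining $x$ to $y$ by a directed external edge. Thus $\Tr^{\mathrm{ES}}(\eta(t))$ is precisely the sum over \emph{ordered} pairs $(x,y)$ of those same contributions. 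Swapping $(x,y)\leftrightarrow(y,x)$ reverses the edge direction, which at the level of $[H^{\otimes s}]_{\Z_2}$ acts by $b$ and contributes the sign $(-1)^{s+1}$ computed in the proof of the preceding theorem. Since the ordered sum already pairs each $(x,y)$ with $(y,x)$, it is automatically $b$-invariant, giving part (1); comparing ordered and unordered sums then gives the factor $\tfrac{1}{2}$ in part (2).

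The main obstacle is verifying the identification of the ES contribution with the loop-with-hairs: one must check that the AS signs accumulated while straightening the rank-$1$ graph via IHX along the unique path from $x$ to $y$, the Koszul sign from the hairy-graph orientation, and the arbitrary edge-direction convention in $T$ all align so that exchanging $x$ and $y$ acts by exactly the sign $(-1)^{s+1}$ of $b$ rather than any other sign. Once that sign-chase is secured, everything else reduces to the routine bookkeeping of ordered versus unordered sums over pairs of univalent vertices.
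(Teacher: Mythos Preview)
Your proposal is correct and follows essentially the same route as the paper: both arguments identify $\Tr^{\mathrm{ES}}\circ\eta$ as a sum over \emph{ordered} pairs of leaves (giving the $b$-invariance from the swap $(x,y)\leftrightarrow(y,x)$) and the rank-$1$ part of $\newtrace$ as the corresponding sum over \emph{unordered} pairs (giving the factor $\tfrac12$). The only difference is that the paper carries out explicitly the step you flag as the ``main obstacle'': it fixes a leaf $x$ and a second leaf $v$, pushes $v$ to the leftmost slot in the expansion of $t_x\in H^{\otimes(s+1)}$, and observes that the remaining word is exactly what one reads around the cycle after IHX-straightening the rank-$1$ hairy graph, with the sign $(-1)^{s+1}$ accounted for as one sign from the antisymmetry of the contraction and $s$ signs from the $s$ trivalent vertices encountered along the path.
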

\begin{proof}
We use the isomorphism $\eta\colon \cT_s(H)\to \mathsf D_s(H)$. Let $t\in  \cT_s(H)$ be a labeled tree, and consider $\eta(t)=\sum_x \ell(x)\otimes t_x$. We think of this as a sum of choosing a root for the tree and remembering the label of the root. 
 Satoh's trace map \cite{S} is defined by the embeddings $\mathsf D_s(H)\hookrightarrow \mathsf H\otimes \mathsf L_{s+1}(H)\hookrightarrow H\otimes H^{\otimes{s+1}}$ and then contracting the first two terms to end up in $H^{\otimes s}$. Fix a univalent vertex $x$. Consider what happens if we focus on contracting $\ell(x)$ with a label on a fixed univalent vertex of $t_x$, say $v$. We can rearrange $t_x$ so that $v$ is leftmost, as in the following picture: $$\ell(x)\otimes \begin{minipage}{3.5cm}\includegraphics[width=3.5cm]{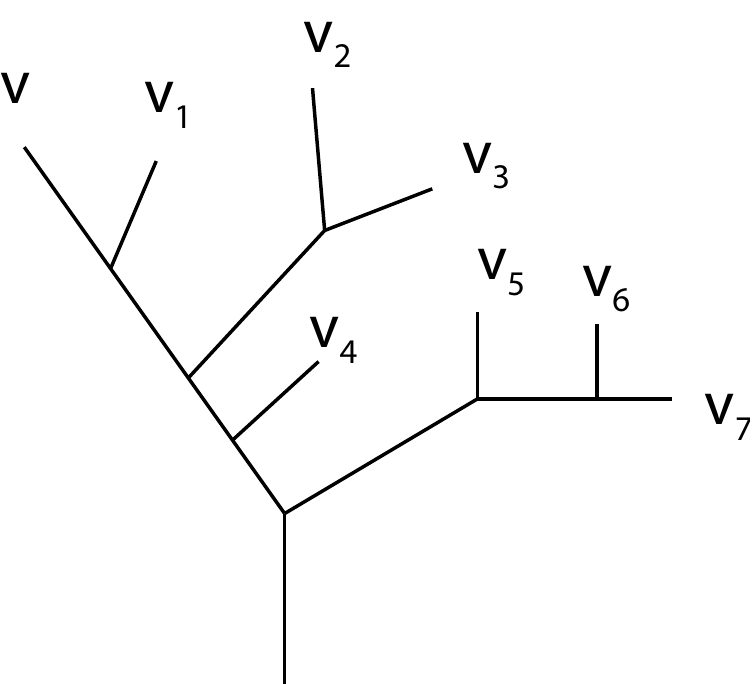}\end{minipage}$$
Since we are concentrating on contracting with $v$, we collect all terms in $H^{\otimes (s+1)}$ where $v$ is first. That means that using the relation $$ \begin{minipage}{.9cm}\includegraphics[width=.9cm]{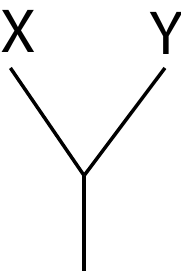}\end{minipage}=X\otimes Y-Y\otimes X$$ the trees growing off of the arc joining $v$ and the root are expanded in the same order they appear. So for example in the picture above we get $\ell(x)\otimes vv_1[v_2,v_3]v_4[v_5[v_6,v_7]]$ which contracts to $\la \ell(x),w\ra v_1[v_2,v_3]v_4[v_5[v_6,v_7]]$. This is the same element of $H^{\otimes s}$ you would get by adding an edge joining $x$ and the vertex labeled $w$ and read off the word around the cycle running along the direction of the added edge, using the fact that IHX relations near the cycle translate to $[X,Y]=XY-YX$.  Thus $\Tr^{\mathrm {ES}}\eta(t)$ can be regarded as summing over adding a directed edge between two leaves of the tree, and reading off the resulting word as you run around the cycle. The extra $\Z_2$ symmetry comes from the fact that you join two vertices once by an edge running in one direction and once with an edge running in the opposite direction. This reverses the word, and yields a sign of $(-1)^{s+1}$. (One sign for flipping the order of contraction, and $s$ signs for the $s$ trivalent vertices of the tree.) This discussion also shows that $\Tr^{\mathrm {ES}}\eta(t)$ is the same as the $1$-edge part of $\newtrace$. The factor of two arises because we only add one edge for every pair of vertices instead of $2$.
\end{proof}

\section{Surjectivity onto a large submodule of $\gspace(H)$}
We begin by defining an analogue of the hairy graph complex and target space $\gspace(H)$ where there is a given bijection from the hairs to $\{1,\ldots,s \}$ as opposed to a labeling of the hairs by vectors.
\begin{definition}\
\begin{enumerate}
\item Let $C_k\hairy[s]$ be the space defined analogously to $C_k\hairy(H)$, but instead of labeling the hairs by vectors in $H$, there are $s$ hairs and a fixed bijection from these hairs to $1,\ldots,s$. The relations are all the same, except there is no multilinearity. Then $C_k\hairy[s]$ is an $\Sym{s}$-module.
\item Similarly define $S_2[s]\subset C_2\hairy[s]$ to be spanned by tripods connected to another tree, by two or three hairs, as well as tripod with a self-loop connected to a tree.
\item $\gspace[s]$ is defined to be $C_1\hairy[s]/(\partial S_2[s]+(\text{trees with no external edges}))$.
\end{enumerate}
\end{definition}
Notice that we have $C_k\hairy[s]\otimes_{\Sym{s}}H^{\otimes s}=\oplus_r C_{k,r,s}\hairy(H)$, and $\gspace[s]\otimes_{\Sym{s}}H^{\otimes s} =\oplus_{r}\gspace_{s,r}(H)$. 

Recall that $H^{\la s\ra}\subset H^{\otimes s}$ is the intersection of the kernels of all pairwise contractions $H^{\otimes s}\to H^{\otimes(s-2)}$. By \cite{FH}, given any partition $\lambda$ of $s$, we have 
\begin{remark}\
\begin{enumerate}
\item $[\lambda]_{\sym s}\otimes_{\sym{s}}H^{\otimes s}\cong [\lambda]_{\GL}$
\item $[\lambda]_{\sym{s}}\otimes_{\sym{s}}H^{\la s\ra}\cong [\lambda]_{\SP}$
\end{enumerate}
\end{remark}
for $\dim(H)$ large enough compared to $s$. 
\begin{definition}
Define a new complex $$C_{k}\hairy{\la H\ra}=\bigoplus_s C_{k}\hairy[s]\otimes_{\sym s}H^{\la s\ra},$$ and a new space $$\gspace\la H\ra =\bigoplus_s \gspace[s]\otimes_{\sym s} H^{\la s\ra}.$$
 \end{definition}
By \cite{FH}, $H^{\otimes s}$ decomposes as a direct sum of $\SP$-modules, including $H^{\la s\ra}$, in a natural way, so there is a projection $H^{\otimes s}\to H^{\la s\ra}$. This gives projections $\pi\colon C_k\hairy(H)\twoheadrightarrow C_k\hairy{\la h\ra}$ and $\pi\colon\gspace(H)\twoheadrightarrow \gspace\la H\ra$.

The following theorem is a consequence of a more general theorem of \cite{CKV2}.
\begin{theorem}[Conant-Kassabov-Vogtmann]\label{ckvthm}
For $\dim H$ large enough compared to $s$,
$$\pi\circ \Tr^{\mathrm{CKV}}\colon  \cT_s(H)\to \bigoplus_r C_{1,r,s}\hairy{\la H\ra}$$
is an isomorphism.
\end{theorem}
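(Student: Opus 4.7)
The plan is to prove the isomorphism by decomposing both sides via classical invariant theory for the symplectic group, under which the trace map becomes a direct sum of natural identifications indexed by partial matchings of hairs.

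First I would reformulate the claim by rank. The map $\pi\circ\Tr^{\mathrm{CKV}}$ decomposes as $\sum_r \pi\circ (T^r/r!)\circ\iota$, where the rank-$r$ component takes a tree $t\in\cT_s(H)$ (with $s$ trivalent vertices, hence $s+2$ hairs), sums over all $r$-matchings $M$ of its hairs, adds an external edge for each pair of $M$ weighted by $\prod_{\{i,j\}\in M}\la\ell_i,\ell_j\ra$, and then projects the remaining $s+2-2r$ labels to $H^{\la s+2-2r\ra}$. At the level of symmetric-group modules (before tensoring with $H$-labels), this rank-$r$ component is the $\Sym{s+2}$-equivariant operation of adjoining $r$ external edges in all possible ways, which is injective on graph species by an elementary filtration argument.

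Next I would invoke Weyl's first fundamental theorem for $\SP(H)$ (the representation-theoretic content behind the paper's citation of \cite{FH}): for $\dim H\gg s$ there is a canonical $\SP(H)$-equivariant splitting
$$
H^{\otimes n}\;\cong\; \bigoplus_{\mathcal M}\,H^{\la n-2|\mathcal M|\ra},
$$
where $\mathcal M$ runs over partial matchings of $\{1,\ldots,n\}$ and the embedding of each summand is by inserting the bivector $\sum_i(p_i\otimes q_i-q_i\otimes p_i)\in H\otimes H$ across each matched pair, with a traceless tensor at the unmatched positions. Under this splitting, contraction at indices $i,j$ annihilates every $\mathcal M$-summand in which $\{i,j\}$ is not a pair of $\mathcal M$ and extracts (up to a nonzero scalar) the traceless factor from the summand where $\{i,j\}\in\mathcal M$. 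Consequently, the rank-$r$ part of $\pi\circ\Tr^{\mathrm{CKV}}$ applied to a label in the $\mathcal M$-summand (with $|\mathcal M|=r$) contributes only the single hairy graph whose external edges realize $\mathcal M$, while every other label summand is killed by $\pi$.

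To close the loop I would construct an explicit two-sided inverse $\Phi$ by cutting each of the $r$ external edges of a rank-$r$ hairy graph into two new hairs and labeling those hairs using the above bivector; this produces an element of $\cT_s(H)$ whose label tensor lies precisely in the $\mathcal M$-summand, where $\mathcal M$ encodes the cut. The identity $\Phi\circ(\pi\circ\Tr^{\mathrm{CKV}})=\id$ and its reverse are then matching-by-matching computations. The main obstacle will be showing that $\Phi$ descends to a well-defined map modulo the IHX, AS, and edge-orientation relations on $C_1\hairy(H)$, and that combinatorial factors ($1/r!$ from $\exp(T)$, edge-direction signs, tree numberings) align with the symmetric-group action on matchings. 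The IHX compatibility follows from the fact that the inserted bivector can be slid through trivalent vertices via IHX, mirroring the defining property of $\mathsf D(H)$ as derivations killing the symplectic form.
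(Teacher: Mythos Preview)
The paper does not supply its own proof of this statement; it is quoted as a consequence of a more general result in \cite{CKV2}, so there is no in-paper argument to compare against.

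Your outline has the right architecture---decompose $H^{\otimes(s+2)}$ via partial matchings and build an inverse $\Phi$ by inserting the symplectic bivector along cut external edges---but the key claim that contraction is diagonal with respect to this splitting is false. Take $n=4$, $\mathcal M=\{1,2\}$, and the label $\omega_{12}\otimes x_{34}$ with $x\in H^{\la 2\ra}$. Contracting at the pair $\{1,3\}\notin\mathcal M$ does \emph{not} vanish: writing $\omega=\sum_a p_a\otimes q_a-q_a\otimes p_a$ and using $\sum_a\la p_a,u\ra q_a-\la q_a,u\ra p_a=u$, the result is exactly $x$ placed at positions $2,4$, which is already traceless and therefore survives $\pi$. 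Thus $\pi\circ T$ applied to the $\mathcal M$-summand produces, besides the expected ``diagonal'' hairy graph with external edge at $\{1,2\}$ and coefficient $2g$, off-diagonal hairy graphs with edges at $\{1,3\},\{1,4\},\{2,3\},\{2,4\}$. Your asserted identities $(\pi\circ\Tr^{\mathrm{CKV}})\circ\Phi=\id$ and its reverse therefore fail, and the ``matching-by-matching'' verification cannot close.

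The strategy is salvageable. The $\SP$-equivariance argument, together with the fact that $H^{\la k\ra}$ and $H^{\la k'\ra}$ share no $\SP$-irreducibles for $k\neq k'$, does correctly force $\pi\circ\Tr^{\mathrm{CKV}}$ to be block-diagonal in rank. Within rank $r$, the composite $(\pi\circ T^r/r!)\circ\Phi$ is not a scalar but has the form $(2g)^r\cdot\id+(\text{terms of lower degree in }g)$: each contraction that hits a matched $\omega$-pair contributes a factor of $2g$, while mixed contractions merely shuffle labels without introducing $g$. For $\dim H=2g$ sufficiently large relative to $s$ this endomorphism is invertible, and that is exactly where the stability hypothesis enters. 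The missing ingredient is thus not the construction of $\Phi$ but an honest treatment of the off-diagonal contractions and an invertibility argument for large $g$.
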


\begin{corollary}
The composition $\pi\circ\newtrace\colon\cT_s(H)\to \gspace_s\la H\ra$ is an epimorphism.
\end{corollary}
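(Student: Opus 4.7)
The plan is to deduce surjectivity directly from Theorem \ref{ckvthm} via a short diagram chase. Consider the commutative square
$$
\xymatrix{
\bigoplus_r C_{1,r,s}\hairy(H) \ar@{->>}[r]^-{q} \ar@{->>}[d]_{\pi} & \bigoplus_r \gspace_{s,r}(H) \ar@{->>}[d]^{\pi} \\
\bigoplus_r C_{1,r,s}\hairy\la H\ra \ar@{->>}[r]^-{q'} & \bigoplus_r \gspace_{s,r}\la H\ra
}
$$
where $q$ is the structural quotient of $C_1\hairy(H)$ by $\partial S_2$ (restricted to rank $r$ and $s$ hairs; note that the restriction $r\geq 1$ automatically discards the $\iota(\cT(H))$ summand), $q'$ is its $\la\cdot\ra$-analogue obtained by the same quotient on the label-free side tensored with $H^{\la s\ra}$, and the vertical arrows are the projections induced by the $\sym s$-equivariant summand projection $H^{\otimes s}\twoheadrightarrow H^{\la s\ra}$ of \cite{FH}. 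By the definition of $\newtrace$ we have $\newtrace|_{\cT_s(H)}=q\circ\Tr^{\mathrm{CKV}}$, so following the square around both ways yields
$$\pi\circ\newtrace \;=\; \pi\circ q\circ\Tr^{\mathrm{CKV}} \;=\; q'\circ\bigl(\pi\circ\Tr^{\mathrm{CKV}}\bigr).$$

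First I would verify commutativity of the square. This is a naturality statement for the construction $\gspace[s]=C_1\hairy[s]/\partial S_2[s]$ tensored over $\sym s$: since the splitting $H^{\otimes s}\twoheadrightarrow H^{\la s\ra}$ is $\sym s$-equivariant, it commutes with every $\sym s$-equivariant relation imposed on the label-free complex, and in particular with the relations $\partial S_2[s]$. Surjectivity of $q'$ is immediate from right-exactness of $(-)\otimes_{\sym s}H^{\la s\ra}$ applied to the surjection $C_1\hairy[s]\twoheadrightarrow\gspace[s]$.

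Given the square, the corollary is then a one-line consequence: $\pi\circ\newtrace=q'\circ(\pi\circ\Tr^{\mathrm{CKV}})$ is the composition of the isomorphism supplied by Theorem \ref{ckvthm} with the surjection $q'$, hence itself surjective. There is no real obstacle to the argument; the essential content sits entirely in Theorem \ref{ckvthm}, and what remains is only to confirm that modding out by $\partial S_{2,r,s}$ is compatible with the $\SP$-isotypic projection $\pi$, which is automatic from $\sym s$-equivariance.
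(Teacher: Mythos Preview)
Your proposal is correct and follows essentially the same approach as the paper: both arguments factor $\pi\circ\newtrace$ through the isomorphism $\pi\circ\Tr^{\mathrm{CKV}}$ of Theorem~\ref{ckvthm} followed by the quotient surjection onto $\gspace\la H\ra$, using the same commutative square (the paper draws it with $\pi$ horizontal and the quotient vertical, you transpose it). Your additional remarks on $\sym s$-equivariance and right-exactness are sound justifications for steps the paper leaves implicit.
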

\begin{proof}
Consult the following commutative diagram
$$\xymatrix{
\cT_s(H)\ar@/^2pc/@{>->>}[rr]^{\cong (\text{Thm. \ref{ckvthm}})}
\ar[r]^{\Tr^{\mathrm{CKV}}}&\bigoplus_r C_{1,r,s}\hairy(H)\ar@{->>}[d]\ar@{->>}[r]^{\pi}&\bigoplus_r C_{1,r,s}\la H\ra\ar@{->>}[d]\\
&\bigoplus_r \gspace_{s,r}(H)\ar@{->>}[r]^{\pi}&\gspace\la H\ra
}
$$
\end{proof}

\begin{corollary} In particular $\Tr^{\mathrm{ES}}$ surjects onto $\gspace_{s,1}\la V\ra \cong [H^{\la s\ra}]_{D_{2s}}$.
\end{corollary}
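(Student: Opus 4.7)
The plan is to deduce this statement directly by specializing the preceding corollary to the rank-one summand and then translating via the commutative diagram that identifies the $r=1$ projection of $\newtrace$ with $\tfrac{1}{2}\Tr^{\mathrm{ES}}\circ\eta$. No new technical input is needed; the result is essentially a bookkeeping corollary combining the two preceding theorems.

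First I would apply the preceding corollary to obtain the surjection $\pi\circ\newtrace\colon \cT_s(H) \twoheadrightarrow \gspace_s\la H\ra = \bigoplus_{r\geq 1}\gspace_{s,r}\la H\ra$. Since both the number of hairs and the first Betti number are preserved by $\newtrace$ (it only adds external edges, which are counted by $r$) and by $\pi$ (which acts only on the hair labels, not the underlying graph), this decomposition is respected. Postcomposing with the projection onto the $r=1$ summand then yields a surjection $\cT_s(H) \twoheadrightarrow \gspace_{s,1}\la H\ra$.

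Next I would invoke the commutative diagram from the ES-identification theorem, which asserts that the projection of $\newtrace$ onto $\gspace_{s,1}(H)$ coincides with $\tfrac{1}{2}\Tr^{\mathrm{ES}}\circ\eta$. This identification is natural in the hair labels, hence compatible with the projection $\pi$ induced by $H^{\otimes s}\twoheadrightarrow H^{\la s\ra}$. Since $\eta$ is an isomorphism and the scalar $\tfrac12$ is invertible in characteristic $0$, the surjectivity established in the previous step transfers to give $\pi\circ\Tr^{\mathrm{ES}}\colon \mathsf D_s(H) \twoheadrightarrow \gspace_{s,1}\la H\ra$. The isomorphism $\gspace_{s,1}\la H\ra \cong [H^{\la s\ra}]_{D_{2s}}$ follows by repeating the earlier proof that $\gspace_{s,1}(H)\cong [H^{\otimes s}]_{D_{2s}}$ (collapsing via IHX to a loop of labeled hairs, with slide relations producing the dihedral action) verbatim with labels in $H^{\la s\ra}$ in place of $H^{\otimes s}$; the argument uses only IHX and slide moves and so does not interact with the contraction-kernel condition.

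There is no substantive obstacle here, since the real work was done in the previous corollary (via Theorem \ref{ckvthm}) and in the ES-identification theorem. The only point meriting a sentence of care is the verification that $\pi$ respects the rank grading, which is automatic from the fact that $\pi$ modifies labels on existing hairs rather than the graph structure itself.
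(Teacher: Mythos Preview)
Your proposal is correct and matches the paper's intended argument. The paper gives no separate proof for this corollary—it is stated with the phrase ``In particular,'' indicating it follows immediately from the preceding corollary together with the commutative diagram identifying the $r=1$ part of $\newtrace$ with $\tfrac{1}{2}\Tr^{\mathrm{ES}}\circ\eta$—and your write-up is precisely the unpacking of that implication. One small simplification: the isomorphism $\gspace_{s,1}\la H\ra\cong [H^{\la s\ra}]_{D_{2s}}$ need not be reproven graphically; it follows formally from the definition $\gspace_{s,1}\la H\ra=\gspace_{s,1}[s]\otimes_{\Sym s}H^{\la s\ra}$ once you know $\gspace_{s,1}[s]\otimes_{\Sym s}H^{\otimes s}\cong [H^{\otimes s}]_{D_{2s}}$.
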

 Also note that by the above remark if $\gspace_{s,r}(H)=\oplus_\lambda m_\lambda[\lambda]_{\GL}$, then $\gspace_{s,r}\la H\ra=\oplus_\lambda m_\lambda[\lambda]_{\SP}$, so  the $\GL(H)$-representation theory for $\gspace(H)$ determines the $\SP(H)$ representation theory for $\gspace\la H\ra$.

\section{Presentation for $\gspace_{s,2}(H)$}\label{sec:2loop}
%The space $\gspace_{1,s}(H)$ was already determined to be $[H^{\otimes s}]_{D_{2s}}$.
 To set up the main theorem of this section let $T(H)$ be the tensor algebra and $T^+(H)$ the positive degree part of it. Define an involution $\rho\colon T(H)\to T(H)$ by  $\rho(v_1\cdots v_k)=\overline{v_1\cdots v_k}= (-1)^kv_k\cdots v_1$. For an index set $I=\{i_1,\ldots, i_k\}$, let $v_I=v_{i_1}\cdots v_{i_k}$.
Now define a
coproduct $\Delta\colon T(H)\to T(H)\otimes T(H)$ by $$\Delta(v_K)=\sum_{K=I\cup J}{v_I}\otimes v_J$$ where the sum is over all partitions of $K$ into two disjoint sets $I$ and $J$.

In this section we prove the following theorem:
\begin{theorem}
$$\bigoplus_{s\geq 0}\gspace_{s,2}(H)\cong [T^+(H)\otimes T^+(H)]_{\Z_2\times \Z_2}/\mathrm{Rel}$$
where the $\Z_2\times \Z_2$ acts via $v_I\otimes w_J\mapsto \overline{v_I}\otimes \overline{w_J}$ and $v_I\otimes w_J\mapsto \overline{w_I}\otimes \overline{v_J}$. The relations $\mathrm{Rel}$ are of the form
\begin{enumerate}
\item $-(v_0\otimes 1+1\otimes v_0)v_I\otimes w_J+v_I\otimes w_J(v_0\otimes 1+1\otimes v_0)=0$ where $v_0\in H$.
\item $(\rho\otimes 1)\left(\Delta(v_I)(1\otimes w_J)\right)+v_I\otimes w_J+(1\otimes \rho)\left((v_I\otimes 1)\Delta(w_J)\right)=0$.
\end{enumerate}
\end{theorem}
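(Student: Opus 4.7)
The strategy is to construct the claimed isomorphism by producing a map $\Phi$ from the algebraic side to $\bigoplus_s\gspace_{s,2}(H)$ and then building an inverse $\Psi$ from a normal form for rank-$2$ graphs.

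First I would establish a normal form: every element of $C_{1,2,s}\hairy(H)$ can, using IHX, be written as a linear combination of \emph{dumbbells} --- two disjoint cycles connected by a single bridge edge, with all hairs attached along the two cycles. A connected trivalent graph with $b_1=2$ and no univalent vertices is either a theta graph or a dumbbell, and I would use IHX on one edge of the theta graph to convert it to dumbbell form; any hair sitting on the bridge can be IHX-pushed onto one of the two loops. Given a dumbbell, reading the hair labels in cyclic order along each loop, starting from the bridge-vertex, produces an ordered pair of nonempty words $(v_I,w_J)\in T^+(H)\otimes T^+(H)$, and I would define $\Phi(v_I\otimes w_J)$ to be the class of this dumbbell in $\gspace_{s,2}(H)$.

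Next I would verify that $\Phi$ factors through $[T^+(H)\otimes T^+(H)]_{\Z_2\times\Z_2}/\mathrm{Rel}$. The first $\Z_2$ corresponds to simultaneously reversing the cyclic reading directions on both loops; the AS relations at the two bridge-vertices together contribute the signs $(-1)^{|I|+1}$ and $(-1)^{|J|+1}$ that are encoded in $\rho$. The second $\Z_2$ corresponds to the $180^\circ$ symmetry swapping the two loops. Relation (1) encodes the slide relation from $\partial S_2$ (item 2 of Figure~\ref{boundaries}) applied at the bridge-vertex: a hair $v_0$ placed adjacent to the bridge may be slid from one loop to the other, and the commutator form records the ambiguity in its position within the cyclic word at that vertex. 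Relation (2) encodes the IHX move near the bridge edge together with the third, tripod-type $\partial S_2$ relation; the coproduct $\Delta$ captures the choices of where to break each cyclic word when the bridge is re-routed, while $\rho$ records the orientation reversal of the arc that gets flipped onto the other loop.

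Surjectivity of the induced map is immediate from the normal form. For injectivity I would construct $\Psi$ by reducing an arbitrary rank-$2$ graph to dumbbell form via IHX and reading off $(v_I,w_J)$. The main obstacle will be well-definedness of $\Psi$: all IHX, AS, multilinearity, and $\partial S_2$ moves performed on the graphical side must correspond, after passage to the tensor-algebra side, to elements of the $\Z_2\times\Z_2$ action together with $\mathrm{Rel}$. IHX moves internal to a single loop act trivially on the associated cyclic word; AS and slide moves along a loop give exactly the $\rho$ action on that tensor factor and the dihedral shift that is absorbed by the loop itself. The delicate case is IHX applied in a neighborhood of the bridge-vertex (including moves that temporarily lengthen the bridge or convert dumbbell-topology to theta-topology); I expect a careful case analysis to show that each such move reduces, modulo earlier cases, to an instance of relation (1) or relation (2). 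Confirming that these two relations generate all remaining ambiguity --- i.e.\ proving confluence of the reduction algorithm implicit in $\Psi$ --- is the heart of the argument and the main technical obstacle.
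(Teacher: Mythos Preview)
Your normal form is the wrong one. The paper reduces to \emph{both} theta and eyeglass (dumbbell) generators and then uses the IHX relation at the internal edge joining the two external-edge attaching points to eliminate the eyeglass generators in favor of thetas: the relation is $[v_I|w_J]_e=[v_I|w_J]_t+[\bar v_I|w_J]_t$, which expresses a dumbbell as a sum of two thetas, not the reverse. One cannot invert this to write a single theta in terms of dumbbells; the dumbbell span is exactly the image of $1+(\rho\otimes 1)$ on theta-space, which is proper. So your claimed step ``use IHX on one edge of the theta graph to convert it to dumbbell form'' fails, and dumbbells do not span $\gspace_{s,2}(H)$. The $\Z_2\times\Z_2$ in the statement is precisely the theta symmetry group (simultaneous reversal $v_I\otimes w_J\mapsto\bar v_I\otimes\bar w_J$ and the swap $v_I\otimes w_J\mapsto\bar w_J\otimes\bar v_I$); the dumbbell has the \emph{additional} symmetry of flipping a single loop, which is not in this group and would have to be imposed as an extra relation if you insisted on dumbbell generators.

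Your identification of the relations is also off. Relation~(1) is not a slide: the bridge of a dumbbell is a tree edge, not an external edge, so the $\partial S_2$ slide does not apply there. In the paper relation~(1) is an IHX relation (``IHX1''), recording the ambiguity in where a hair $v_0$ lands when IHX is used to push hairs toward the external-edge endpoints. Relation~(2) is exactly the tripod boundary (the third $\partial S_2$ relation) written for theta generators; the coproduct $\Delta$ arises from repeatedly applying the local IHX identity to clear hairs off the contracted arc, and $\rho$ appears because one arc is read in the reverse direction after the contraction. There is no separate ``IHX near the bridge'' contribution folded into it. Once you switch to theta generators the whole argument is short: list the theta symmetries and the loop, IHX1, IHX2 and tripod relations, observe that IHX2 eliminates the eyeglass generators and that the loop relation forces $T^+$ rather than $T$, and read off the presentation.
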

\begin{proof}
As in the case of $\gspace_{s,1}$ we can apply IHX relations so that we have a trivalent core graph with hair attached. So we have a unitrivalent tree with all of its univalent vertices joined by external edges in pairs,  and to which $s$ hairs are attached. By IHX relations we can move the hair to the edges of the tree that attach to the external edges, and by slide relations we can assume that the hairs are all attached on one side of the external edge. Thus we have two types of generators as depicted in Figure~\ref{generators}. The subscript $e$ stands for ``eyeglasses" and the subscript $t$ stands for ``theta."

\begin{figure}
\begin{enumerate}
\item
$$[v_1\cdots v_m|w_1\cdots w_n]_e=\begin{minipage}{5cm}
\includegraphics[width=5cm]{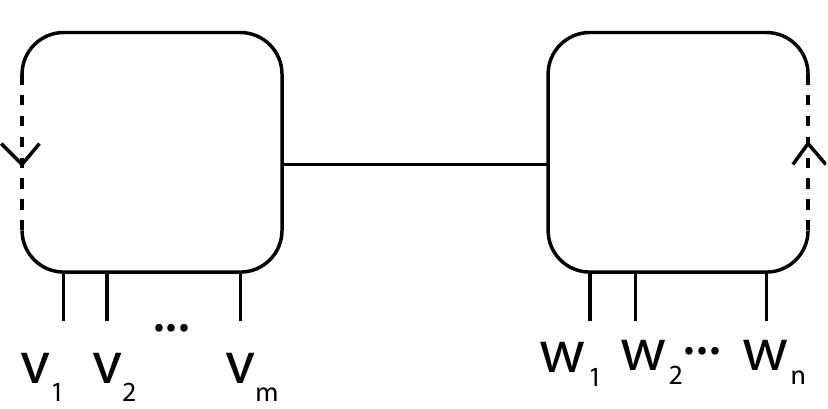}
\end{minipage}$$
\item
$$[v_1\cdots v_m|w_1\cdots w_n]_t=\begin{minipage}{4cm}
\includegraphics[width=4cm]{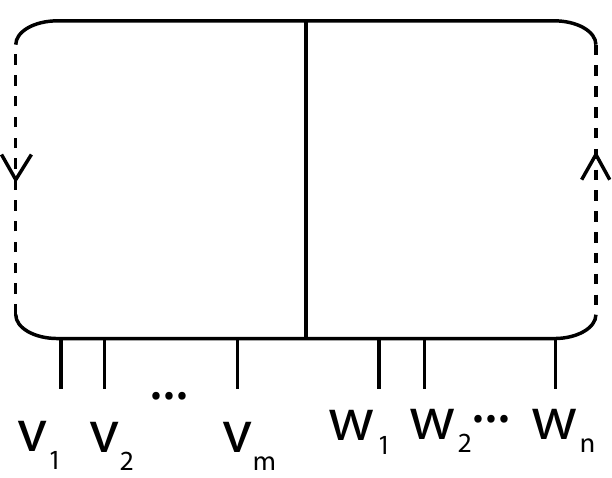}
\end{minipage}$$
\end{enumerate}
\caption{Generators of $\gspace_{s,2}$ where $m+n=s$.}\label{generators}
\end{figure}

By multilinearity, we may extend the symbols $[x|y]_{e,t}$ to any $x,y$ in the tensor algebra $T(H)$. Symmetries of the graphs give rise to the relations, using the sliding relations to move hairs back to the bottom of the picture:
\begin{enumerate}
\item[S1]: $[v_I|w_J]_e=[\overline{v_I}|w_J]_e$.
\item[S2]: $[v_I|w_J]_e=[\bar w_J| \bar v_I]_e$
\item[S3]: $[v_I|w_J]_t=[\bar v_I| \bar w_J]_t$
\item[S4]: $[v_I|w_J]_t=[\bar w_J| \bar v_I]_t$
\end{enumerate}
The loop relation gives us (using IHX)
\begin{enumerate}
\item[L]: $[\,|w_J]_t=[\,|w_J]_e=0$
\end{enumerate}
The IHX relation has two effects. IHX1 relates the theta graph and eyeglass graph . However, we also used IHX to push hairs to be near the external edge, and the ambiguity of where to push a hair labeled $v_0$ gives IHX1 below. 
\begin{enumerate}
\item[IHX1]: $[v_Iv_0|w_J]-[v_I|v_0w_J]-[v_0v_I|w_J]+[v_I|w_Jv_0]=0$ (e or t) $\deg(v_0)=1$
\item[IHX2]: $[v_I|w_J]_e=[v_I|w_J]_t+[\bar v_I|w_J]_t$
\end{enumerate}
Finally the boundary of a tripod with three incident edges yields
\begin{enumerate}
\item[TRI] : Then $\sum_{I\cup J=K}[ \bar{v_I}|v_J w_L]_t+[v_K|w_L]_t+\sum_{I\cup J=L}[v_K\bar w_I|w_J]_t=0$.
\end{enumerate}
To see this consider Figure~\ref{boundarytripod}. A boundary is shown in (1). To move the hair off of the left edge of the first summand, we repeatedly use the IHX relation shown in (2), to iteratively build up the terms described in (3). 
\begin{figure}
\begin{enumerate}
\item
$$
\partial\begin{minipage}{3cm}\includegraphics[width=3cm]{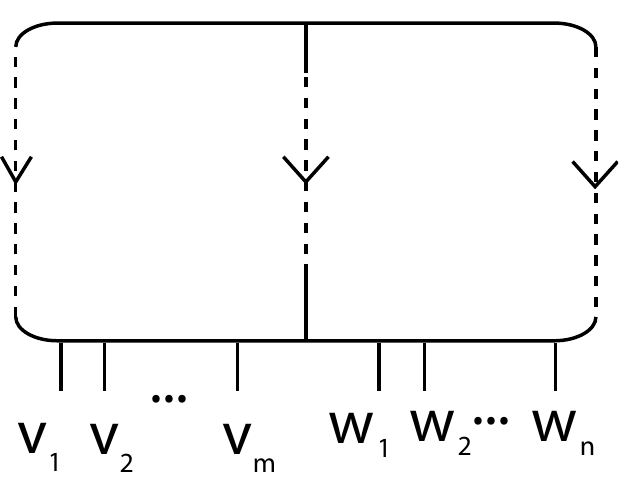}\end{minipage}=
\begin{minipage}{3cm}\includegraphics[width=3cm]{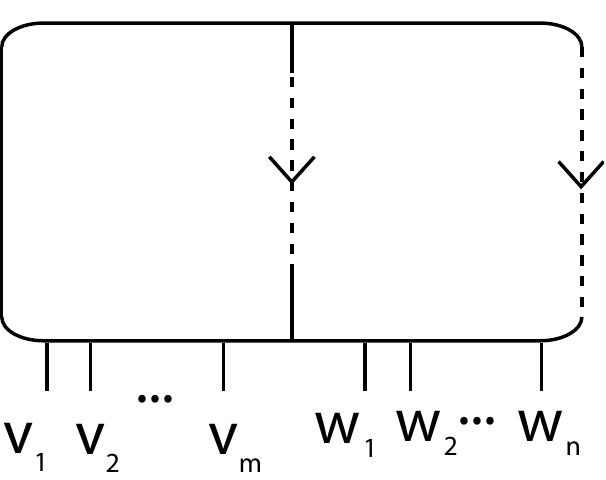}\end{minipage}+
\begin{minipage}{3cm}\includegraphics[width=3cm]{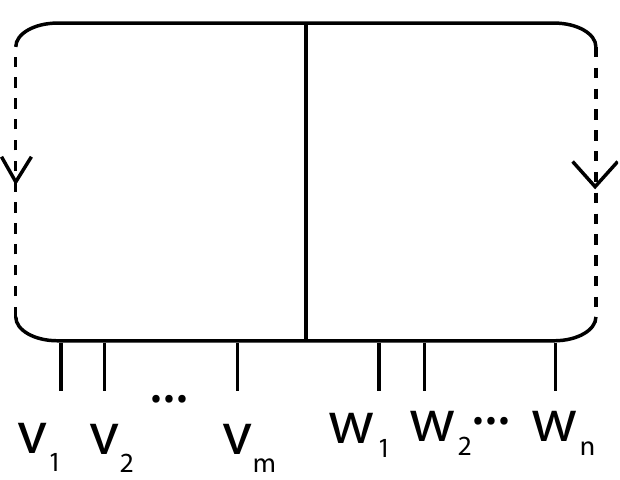}\end{minipage}+
\begin{minipage}{3cm}\includegraphics[width=3cm]{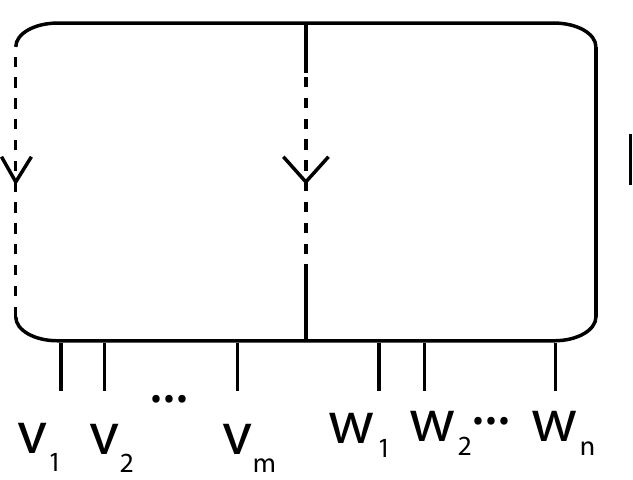}\end{minipage}
$$
\item $$
\begin{minipage}{2cm}\includegraphics[width=2cm]{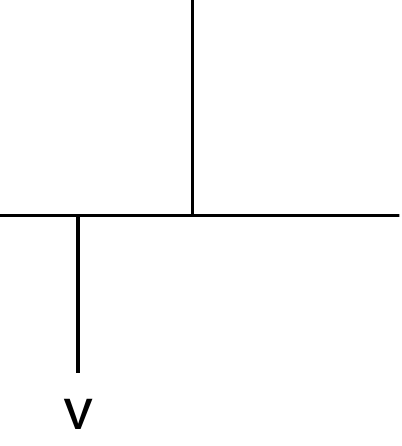}\end{minipage}=\begin{minipage}{2cm}\includegraphics[width=2cm]{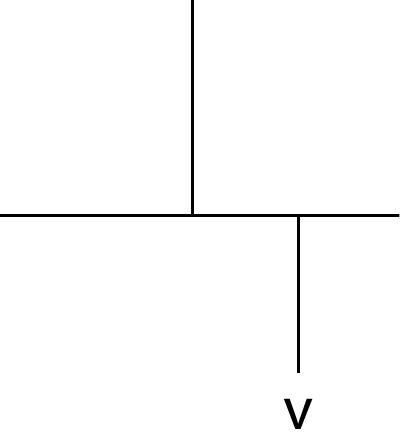}\end{minipage}-\begin{minipage}{2cm}\includegraphics[width=2cm]{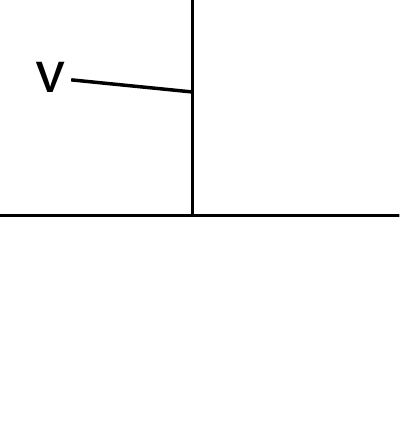}\end{minipage}
$$
\item $$\begin{minipage}{3cm}\includegraphics[width=3cm]{tribdry2.pdf}\end{minipage}=-\sum_{I\cup J=\{1,\ldots, m\}}\begin{minipage}{3.3cm}\includegraphics[width=3.3cm]{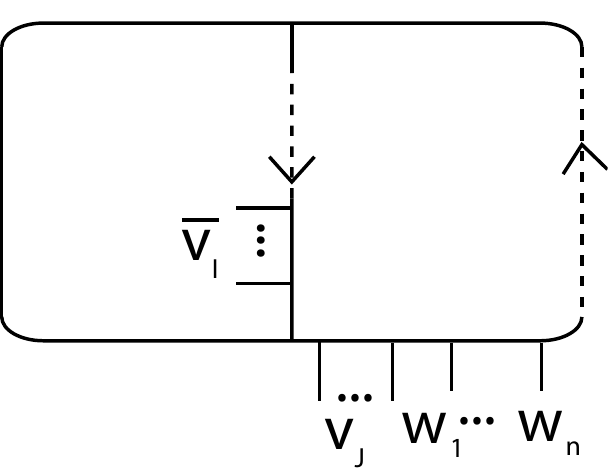}\end{minipage}=-\sum_{I\cup J=\{1,\ldots, m\}}[\overline{v_I}|v_Jw_1\cdots w_n].$$
$$\begin{minipage}{3cm}\includegraphics[width=3cm]{tribdry3.pdf}\end{minipage}=-[v_1\cdots v_m|w_1\cdots w_n].$$
$$
\begin{minipage}{3cm}\includegraphics[width=3cm]{tribdry4.pdf}\end{minipage}-\sum_{I\cup J=\{1,\ldots, n\}}\begin{minipage}{3.3cm}\includegraphics[width=3.3cm]{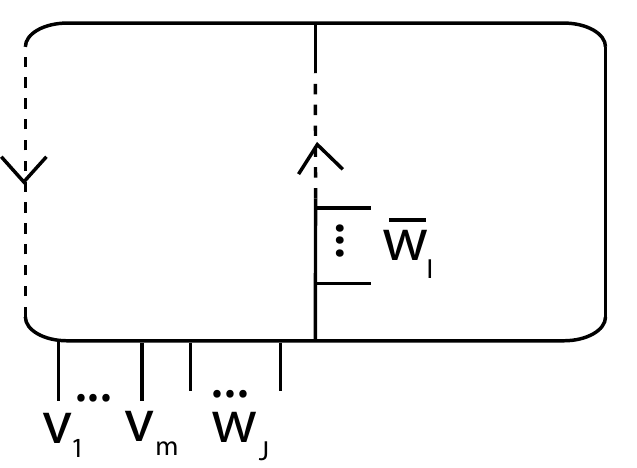}\end{minipage}=-\sum_{I\cup J=\{1,\ldots, n\}}[v_1\cdots v_m w_I|\overline{w_J}].
$$
\end{enumerate}
\caption{Deriving the TRI relation.}\label{boundarytripod}
\end{figure}

Using $IHX2$ we can express everything in terms of the $t$ generators. S1 and S2 are consistent with S3 and S4, so we are left with relations S3,S4,L,IHX1 and TRI. Interpreting $[v_I|w_J]\in T(H)\otimes T(H)$ gives the theorem.
\end{proof}

Computer calculations using this presentation yield the following results:

\begin{theorem}\
\label{thm:omega2}
For $s\leq 5$, $\gspace_{s-2,2}(H)=\gspace_{s-2,2}\la H\ra=0$.
\begin{enumerate}
\item $\gspace_{4,2}\la H\ra \cong  [1^4]_{\SP}\oplus[31]_{\SP}$, yielding representations in $\mathsf C_6$.
\item $\gspace_{5,2}\la H\ra\cong 2 [31^1]_{\SP}\oplus [2^21]_{\SP}\oplus[21^3]_{\SP}$, yielding representations in $\mathsf C_7$.
\item $\gspace_{6,2}\la H\ra\cong [1^6]\oplus 2[51]\oplus 3[42]\oplus[3^2]\oplus 3[321]\oplus 2[2^3]\oplus 2[2^21^2]\oplus 2[21^5]\oplus [1^6]$, yielding representations in $\mathsf C_8$.
\end{enumerate}
\end{theorem}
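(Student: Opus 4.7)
The plan is to turn the presentation from the preceding theorem into finite linear algebra. Setting $n=s-2$, the computation of $\gspace_{n,2}(H)$ reduces to computing the dimension and $\Sym{n}$-character of a finite-dimensional quotient as follows: replace each vector label by a formal index from $\{1,\ldots,n\}$, so the generators $[v_I|w_J]_t$ become pairs of ordered words partitioning $\{1,\ldots,n\}$; translate each of the relations S3, S4, L, IHX1, and TRI into a linear equation among these symbols; and compute the cokernel. By the remark preceding the theorem, the resulting $\Sym{n}$-module determines the $\GL(H)$-decomposition of $\gspace_{n,2}(H)$ directly, while tensoring over $\Sym{n}$ with $H^{\la n\ra}$ yields the $\SP(H)$-decomposition of $\gspace_{n,2}\la H\ra$.

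For $s\leq 5$, i.e.\ $n\leq 3$, the relations alone force the vanishing by a short hand calculation. For instance, when $n=2$, L kills $[v_I|\,]_t$ and $[\,|w_J]_t$, leaving only $[v_1|v_2]_t$ and $[v_2|v_1]_t$; S4 identifies these, while TRI applied with $K=\emptyset$ and $L=\{1,2\}$ (modulo L) identifies them with opposite signs, forcing both to vanish. Analogous brief case analyses dispose of $n=0,1,3$. For $s=6,7,8$ one assembles the presentation into a sparse matrix, computes its cokernel using a computer algebra system, and reads off the $\Sym{n}$-character by evaluating the permutation action on a set of conjugacy-class representatives. Converting through Schur--Weyl yields the listed irreducible decompositions.

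The main obstacle is scale: the raw generator count for $n=6$ is $(n+1)!=5040$ before reduction, and TRI introduces sums indexed by subsets of $\{1,\ldots,n\}$, so a naive setup is unworkable. Efficiency requires splitting the presentation by the bidegree of the two words, quotienting out S3, S4, L, and IHX2 as early-stage simplifications before bringing in TRI, and selecting a small generating set of permutations to extract the character without iterating over all of $\Sym{n}$. A final numerical sanity check, needed to confirm the identity $\gspace_{n,2}(H)=\gspace_{n,2}\la H\ra$ in all three cases, matches the $\GL$-dimensions predicted by the $\Sym{n}$-decomposition against the dimensions of the corresponding $\SP$-modules, certifying that no trace-contracted Schur summands of smaller degree contribute.
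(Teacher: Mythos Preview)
Your approach is essentially the paper's own: the paper simply states that ``computer calculations using this presentation yield'' the listed decompositions, and you have spelled out how such a computation would actually be organized---pass to the combinatorial $\Sym{n}$-module with indexed hairs, compute the quotient by the relations S3, S4, L, IHX1, TRI, extract the character, and translate via Schur--Weyl. Your sample hand calculation for $n=2$ is correct.

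There is one confusion in your final paragraph. The theorem does \emph{not} assert $\gspace_{n,2}(H)=\gspace_{n,2}\langle H\rangle$ for $n=4,5,6$; that equality is stated only for $n\leq 3$, where both sides vanish. For $n\geq 4$ the two spaces are genuinely different as $\SP$-modules, since $[\lambda]_{\GL}$ restricted to $\SP$ strictly contains $[\lambda]_{\SP}$. What you actually need is only the remark following the corollary on surjectivity: once the $\Sym{n}$-decomposition $\bigoplus_\lambda m_\lambda[\lambda]_{\Sym{n}}$ is in hand, the $\SP$-decomposition of $\gspace_{n,2}\langle H\rangle$ is automatically $\bigoplus_\lambda m_\lambda[\lambda]_{\SP}$, with no further dimension comparison required. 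Your proposed ``sanity check'' matching $\GL$- against $\SP$-dimensions would in fact fail, but nothing in the argument depends on it---simply drop that step.
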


\section{Representation theory of $[H^{\la s\ra }]_{D_{2s}}$} \label{reptheory}
In this section we analyze the $\SP$-representation theory of $[H^{\la s\ra }]_{D_{2s}}$, which is the same as the $\GL$-representation theory of $[H^{\otimes s}]_{D_{2s}}$, which can be analyzed via classical Schur-Weyl duality and character theory. Hand calculations with characters yield the following results for low $s$.
\begin{theorem}\
\begin{enumerate}
\item $[H^{\la 4\ra }]_{D_{8}}\cong [21^2]_{\SP}$, which picks up the $[21^2]_{\SP}\in \mathsf C_4$ found by Morita.
\item $[H^{\la 5\ra}]_{D_{10}}\cong [5]_{\SP}\oplus [32]_{\SP}\oplus [2^21]_{\SP}\oplus [1^5]_{\SP}$. This picks up all of the size $5$ $\SP$-representations in $\mathsf C_5$.
\item $[H^{\la 6\ra }]_{D_{12}}\cong [3^2]_{\SP}\oplus 2[41^2]_{\SP}\oplus [321]_{\SP}\oplus [31^3]_{\SP}\oplus [2^21^2]_{\SP}$. Comparing this to computer calculations of $\mathsf C_6$ due to Morita-Sakasai-Suzuki \cite{MSS3}, this picks up all size $6$ representations in $\mathsf C_6$.
\end{enumerate}
\end{theorem}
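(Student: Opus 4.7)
The plan is to exploit classical Schur--Weyl duality together with character theory for the dihedral group. In the stable range $\dim H\gg s$, Schur--Weyl gives the $\SP(H)\times \Sym{s}$-bimodule decomposition
$$H^{\la s\ra}\cong\bigoplus_{\lambda\vdash s}[\lambda]_{\SP}\otimes[\lambda]_{\Sym{s}},$$
with $\Sym{s}$ acting by permuting tensor factors. Embed $D_{2s}\hookrightarrow \Sym{s}$ via the cyclic shift $v_1\otimes\cdots\otimes v_s\mapsto v_s\otimes v_1\otimes\cdots\otimes v_{s-1}$ and the order-reversing reflection, and let $\epsilon$ be the character of $D_{2s}$ that is trivial on rotations and equals $(-1)^{s+1}$ on reflections --- this is exactly the twist packaged into the notation $[H^{\otimes s}]_{D_{2s}}$ by the previous theorem. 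Applying $\epsilon$-twisted coinvariants on the $\Sym{s}$-side yields
$$[H^{\la s\ra}]_{D_{2s}}\cong\bigoplus_{\lambda\vdash s}m_\lambda\,[\lambda]_{\SP},\qquad m_\lambda=\langle\chi_\lambda|_{D_{2s}},\epsilon\rangle_{D_{2s}},$$
so the theorem reduces to computing the multiplicities $m_\lambda$ for $s=4,5,6$.

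These multiplicities are evaluated by the orthogonality formula $m_\lambda=\frac{1}{2s}\sum_{g\in D_{2s}}\chi_\lambda(g)\,\epsilon(g)$, which requires only cycle-type data for the embedding $D_{2s}\hookrightarrow\Sym{s}$. A rotation of order $s/d$ (for $d\mid s$) has cycle type $((s/d)^d)$; when $s$ is odd the unique reflection class has cycle type $(2^{(s-1)/2},1)$; when $s$ is even the two reflection classes have cycle types $(2^{s/2})$ and $(2^{s/2-1},1^2)$, and $\epsilon$ evaluates to $-1$ on both. The values $\chi_\lambda$ on these cycle types are read directly from the character tables of $\Sym{4}$, $\Sym{5}$, $\Sym{6}$, or equivalently computed on the spot via Murnaghan--Nakayama.

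The remainder is organized tabulation: for each $s\in\{4,5,6\}$ and each partition $\lambda$ of $s$, compute the weighted sum above and list the nonzero multiplicities; the $\SP$-decomposition of $[H^{\la s\ra}]_{D_{2s}}$ follows. I expect the main technical obstacle to be the case $s=6$, where eleven partitions, four conjugacy classes of rotations in $\Z_6$, and two reflection classes with different cycle types but both weighted by $-1$ must be combined; arithmetic slips are the principal risk. Once the multiplicity tables are produced, parts (1)--(3) are immediate, and the final sentences of (1)--(3) --- matching the resulting $\SP$-types against those appearing in $\mathsf C_s$ (Figure~\ref{fig:cokernel} for $s=4,5$, and the computer calculations of Morita--Sakasai--Suzuki \cite{MSS3} for $s=6$) --- reduces to a direct comparison of Young diagrams.
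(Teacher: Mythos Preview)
Your proposal is correct and follows essentially the same approach as the paper: the paper simply states that ``hand calculations with characters yield the following results for low $s$,'' and the formula you derive for $m_\lambda$ is precisely the content of the Proposition stated immediately after this theorem. You have in fact spelled out the method more explicitly than the paper does.
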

These calculations are suggestive of the following (somewhat optimistic) conjecture:
\begin{conjecture}
All representations of size $s$ in $\mathsf C_{s}$ are contained in $[H^{\la s\ra}]_{D_{2s}}$.
\end{conjecture}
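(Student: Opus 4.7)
The plan is to recast the conjecture as an injectivity statement: combined with the surjection $\mathsf C_s \twoheadrightarrow [H^{\la s\ra}]_{D_{2s}}$ already established, it suffices to prove that the composite $\pi\circ \Tr^{\mathrm{ES}}\colon \mathsf C_s \to [H^{\la s\ra}]_{D_{2s}}$ is injective when restricted to the size-$s$ isotypic component of $\mathsf C_s$. Equivalently, pulled back along the isomorphism $\cT_s(H) \cong \mathsf D_s(H)$, one must show that the size-$s$ $\SP$-isotype of the kernel of $\pi\circ \Tr^{\mathrm{ES}}$ acting on $\cT_s(H)$ is exactly the size-$s$ isotype of $\tau(\mathsf J_s)$.

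First I would apply Theorem~\ref{ckvthm} to identify $\pi\circ\Tr^{\mathrm{CKV}}$ as an isomorphism of $\cT_s(H)$ onto $\bigoplus_r C_{1,r,s}\hairy\la H\ra$, which reduces the kernel of $\pi\circ\newtrace$ to the preimage of the relations $\pi(\partial S_2)$; projecting to the $r=1$ summand gives the analogous description for the ES trace. Next, invoking Hain's theorem, $\tau(\mathsf J_s)$ is spanned by iterated brackets of order-$1$ tripods, and the proof that $\newtrace$ vanishes on $\tau(\mathsf J)$ already shows that the three pictured $\partial S_2$ relations are produced by the bracket $[t,X]$ with $t$ a tripod. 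The remaining content of the conjecture is therefore that, at the size-$s$ level, these bracket-generated relations exhaust $\pi(\partial S_2)$; by Schur--Weyl duality this can be phrased as a purely combinatorial statement about $[\lambda]_{\Sym{s}}$-equivariant maps between $C_{1,1,s}\hairy[s]$ and $\partial S_{2,1,s}[s]$, which is in principle amenable to character-theoretic and representation-stability arguments analogous to those already used for the low-$s$ verifications.

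The main obstacle is controlling precisely which size-$s$ classes are killed by the slide and tripod-boundary relations beyond those coming from iterated brackets. In particular, the loop-killing relation (Figure~\ref{boundaries}(1)) is an ``extra" relation not obviously produced by $[t,X]$, and while it accounts for the $(-1)^{s+1}$ sign that twists the dihedral action when $s$ is even, showing that it contributes nothing further in the size-$s$ isotype requires a delicate accounting. A plausible route is to argue rank-by-rank: the loop relation is concentrated in $r=1$ where one can directly compare $C_{1,1,s}\hairy[s]$ against the tripod boundary $\partial S_{2,1,s}[s]$, while the three-edge tripod relation lives in $r \ge 2$ and should decrease $\SP$-size via its embedded contraction pattern, so its $r=1$ image cannot affect size-$s$ isotypes. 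Making this filtration argument rigorous is precisely the step that has resisted the author and is, I expect, the reason the statement is offered as a ``somewhat optimistic" conjecture rather than a theorem; a full proof likely requires the techniques of the companion paper \cite{CK} relating $\gspace_{s+2-2r,r}\la H\ra$ to $H^1(\Out(F_r))$ to control the $r\ge 2$ contribution.
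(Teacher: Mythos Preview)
The statement you are attempting to prove is a \emph{Conjecture} in the paper; the paper gives no proof and does not claim one. The only support offered is the surjection $\mathsf C_s\twoheadrightarrow [H^{\la s\ra}]_{D_{2s}}$, the agreement with the Morita--Sakasai--Suzuki computations for $s\le 6$, the containment of the Morita and Enomoto--Satoh series, and the character-theoretic heuristic in Section~\ref{reptheory}. There is therefore nothing in the paper to compare your argument against.

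Your proposal is not a proof either, as you yourself concede in the final paragraph. The reformulation as injectivity of $\pi\circ\Tr^{\mathrm{ES}}$ on the size-$s$ isotype is correct, and using Theorem~\ref{ckvthm} to identify the kernel with the preimage of $\pi(\partial S_2)$ is the natural first step. But the substantive claim---that every size-$s$ class in that preimage already lies in $\tau(\mathsf J_s)$---is precisely the content of the conjecture, and you supply no mechanism for it beyond an appeal to unspecified ``character-theoretic and representation-stability arguments.'' Your rank-by-rank sketch also contains inaccuracies: the loop relation of Figure~\ref{boundaries}(1) is not ``concentrated in $r=1$,'' since the second tree may carry its own external edges and hence the relation occurs in every rank; and the assertion that the three-edge tripod relation ``should decrease $\SP$-size via its embedded contraction pattern'' is groundless after passing to $\gspace\la H\ra$, where by construction no contractions survive. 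Since the conjecture concerns only the rank-$1$ target $[H^{\la s\ra}]_{D_{2s}}\cong\gspace_{s,1}\la H\ra$, the three-edge relation (which lives in rank $\ge 2$) is in any case irrelevant, so the entire rank-filtration strategy is misdirected. At present no argument along these or any other lines is known.
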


In the next theorem we analyze the $4$ representations of lowest complexity, showing that we pick up the Enomoto-Satoh and Morita representations.

\begin{theorem}\label{thm:ESM}\
\begin{enumerate}
\item The representations $[1^s]_{\SP}$ only occur when $s=4m+1$, and in that case with multiplicity one. These are the Enomoto-Satoh terms contained in $[H^{\la 4m+1\ra}]_{D_{2(4m+1)}}$. 
\item The representations $[s]_{\SP}$ only occur when $s=2m+1$, and in that case with multiplicity one. These are the
 Morita terms contained in $[H^{\la 2m+1\ra}]_{D_{2(2m+1)}}$.
\item The representations $[s-1,1]_{\SP}$ and $[2,1^{s-2}]_{\SP}$ do not occur in $[H^{\la s\ra} ]_{D_{2s}}$.
\end{enumerate}
\end{theorem}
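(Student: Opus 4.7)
The strategy is classical: apply Schur--Weyl duality to reduce to a character computation in $\Sym{s}$, then evaluate the averaged characters for each of the four hook partitions individually.

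\emph{Schur--Weyl reduction.} In the stable range $\dim H\gg s$, $H^{\la s\ra}\cong\bigoplus_{\lambda\vdash s}[\lambda]_{\SP}\otimes V_\lambda$ as an $\SP(H)\times\Sym{s}$-module, so
$$[H^{\la s\ra}]_{D_{2s}}\cong\bigoplus_\lambda[\lambda]_{\SP}\otimes (V_\lambda)^{D_{2s}}.$$
Here $D_{2s}$ is embedded in $\Sym{s}$ by the cycle $c=(1,2,\ldots,s)$ and the reversal $w_0\colon i\mapsto s+1-i$, with the reflection generator carrying the sign twist $(-1)^{s+1}$. Character averaging gives
$$m_\lambda=\dim(V_\lambda)^{D_{2s}}=\frac{1}{2s}\left(\sum_{k=0}^{s-1}\chi_\lambda(c^k)+(-1)^{s+1}\sum_{k=0}^{s-1}\chi_\lambda(c^kw_0)\right),$$
and the theorem reduces to evaluating this for $\lambda\in\{(s),(1^s),(s-1,1),(2,1^{s-2})\}$.

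\emph{The extreme hooks.} For $\lambda=(s)$, $\chi_\lambda\equiv 1$, both sums equal $s$, and $m_{(s)}=(1+(-1)^{s+1})/2$, giving $1$ iff $s$ is odd. For $\lambda=(1^s)$, $\chi_{(1^s)}=\mathrm{sgn}$; since $\mathrm{sgn}(c^k)=(-1)^{k(s-1)}$ and $\mathrm{sgn}(w_0)=(-1)^{\lfloor s/2\rfloor}$, the rotation and reflection sums vanish identically when $s$ is even, while for $s$ odd they contribute $s$ and $s(-1)^{(s-1)/2}$ respectively. Hence $m_{(1^s)}=(1+(-1)^{(s-1)/2})/2$, which is $1$ precisely for $s\equiv 1\pmod 4$. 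This completes parts (1) and (2).

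\emph{The near-hooks.} Use the classical identities $\chi_{(s-1,1)}(g)=\mathrm{fix}(g)-1$ and $\chi_{(2,1^{s-2})}=\mathrm{sgn}\cdot\chi_{(s-1,1)}$. For rotations, $\mathrm{fix}(c^k)=s\cdot\delta_{k,0}$, so the $\chi_{(s-1,1)}$ rotation sum telescopes to $(s-1)-(s-1)=0$. For reflections, count fixed points of $c^kw_0$ by the congruence $2i\equiv k+1\pmod s$, which has one solution for each $k$ when $s$ is odd and either none or two when $s$ is even (by parity of $k+1$); in every residue class the reflection sum for $\chi_{(s-1,1)}$ also vanishes, giving $m_{(s-1,1)}=0$. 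The $(2,1^{s-2})$ case is the same computation with the $\mathrm{sgn}$ twist applied pointwise; the resulting alternating sums reduce to a short residue check.

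\emph{Main obstacle.} There is no serious obstacle beyond bookkeeping. The one place that requires care is aligning the external sign twist $(-1)^{s+1}$ with $\mathrm{sgn}(c^k)$ and $\mathrm{sgn}(w_0)=(-1)^{\lfloor s/2\rfloor}$, which forces different residues of $s$ modulo $4$ to be handled separately in the $(2,1^{s-2})$ case. All of the conclusions of the theorem will drop out once these sign contributions are tabulated over the two cosets of $\langle c\rangle$ in $D_{2s}$.
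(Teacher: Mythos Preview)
Your approach for parts (1), (2), and the $[s-1,1]$ half of (3) is correct and is essentially the paper's argument. For the one--dimensional Specht modules the paper simply checks when the generators $a,b$ act trivially on $x_1\wedge\cdots\wedge x_s$ (resp.\ $x_1\cdots x_s$), which is your character average specialized to a one--dimensional representation. For $[s-1,1]$ the paper invokes the exact sequence $0\to[s-1,1]_{\GL}\to S^{s-1}(H)\otimes H\to S^s(H)\to 0$, which on the $\Sym{s}$ side is exactly your identity $\chi_{(s-1,1)}=\mathrm{fix}-1$; the two presentations are the same computation.

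The gap is in the $[2,1^{s-2}]$ case, which you do not actually carry out but only describe as ``a short residue check.'' If you perform it you will \emph{not} get zero in general. For $s$ even one has $\mathrm{sgn}(c^k)=(-1)^k$ and $\mathrm{sgn}(w_0)=(-1)^{s/2}$, while $\mathrm{fix}(c^kw_0)\in\{0,2\}$ according to the parity of $k$; the rotation sum for $\mathrm{sgn}\cdot(\mathrm{fix}-1)$ equals $s$ and the twisted reflection sum equals $(-1)^{s/2}s$, so
\[
m_{(2,1^{s-2})}=\frac{1+(-1)^{s/2}}{2}=
\begin{cases}1,& s\equiv 0\pmod 4,\\ 0,& s\equiv 2\pmod 4,\end{cases}
\]
and $m_{(2,1^{s-2})}=0$ for $s$ odd. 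Thus the asserted vanishing fails when $s\equiv 0\pmod 4$. This is not a defect of your method but of the statement itself: the paper's own hand computation $[H^{\la 4\ra}]_{D_8}\cong[21^2]_{\SP}$ already exhibits the case $s=4$, where $[2,1^{s-2}]=[21^2]$. The paper's proof of this clause is likewise only the sketch ``checking that the $D_{2s}$ coinvariants \ldots\ coincide,'' and that check fails for the same reason at $s=4$: the twisted $D_8$--coinvariants of $\ext^{3}(H)\otimes H$ are one--dimensional while those of $\ext^4(H)$ vanish. You should record the correct residue condition rather than promise a verification that cannot close.
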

\begin{proof}
For the first statement, it suffices to examine the multiplicity of $[1^{s}]_{\GL}$ contained in $[H^{\otimes(s)}]_{D_{2(s)}}$. 
Let $a,b$ be generators of $D_{2s}$. Then 
$$a\cdot(x_1\wedge\cdots\wedge x_s)=x_2\wedge \cdots\wedge x_s\wedge x_1=(-1)^{ s-1}x_1\wedge\cdots\wedge x_s$$
$$b\cdot(x_1\wedge\cdots\wedge x_s)=(-1)^{s+1}x_s\wedge\cdots\wedge x_1=(-1)^{s+1+\lfloor s/2\rfloor}x_1\wedge\cdots\wedge x_s$$
So we need $s-1$ and $s+1+\lfloor s/2\rfloor$ both even, which occurs if and only if $s=4m+1$.

The second statement is proven similarly.

For the third statement, one considers the exact sequences $$0\to [s-1,1]_{\GL}\to S^{s-1}(H)\otimes H\to S^{s}(H)\to 0$$ and
$$0\to [2,1^{s-2}]_{\GL}\to \ext^{s-1}(H)\otimes H\to \ext^{s}(H)\to 0,$$
checking that the $D_{2s}$ coinvariants of $S^{s-1}(H)\otimes H$ and $\ext^{s-1}(H)\otimes H$ coincide with those of $S^{s}(H)$ and $\ext^{s}(H)$ respectively.
\end{proof}

Next we prove a convenient proposition which is instrumental in calculating the $D_{2s}$ coinvariants of a representation $[\lambda]_{D_{2s}}$.

\begin{proposition}
In the untwisted case, the coinvariants $([\lambda]_{\sym{s}})_{D_{2s}}$ have dimension 
$$\frac{1}{2s}\sum_{g\in D_{2s}}\chi_{\lambda}(g),$$ where $\chi_\lambda$ is the character for $[\lambda]_{\sym{s}}$.
In the case where $D_{2s}$ acts with the $\Z_2$ twist, the dimension is
$$\frac{1}{2s}\sum_{g\in D_{2s}}\sigma(g)\chi_{\lambda}(g),$$
where $\sigma\colon D_{2s}\to  \{\pm 1\}$ maps $a\mapsto 1, b\mapsto -1$.
\end{proposition}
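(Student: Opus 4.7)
The proof is a direct application of the standard character-theoretic formula for invariants of a finite group action in characteristic zero, so the plan is to set this up cleanly and then handle the twist.

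The first step is to recall the well-known fact that if $G$ is a finite group acting on a finite-dimensional vector space $V$ over a field of characteristic $0$, then the averaging operator $e = \frac{1}{|G|}\sum_{g\in G} g \colon V \to V$ is an idempotent onto $V^G$, so $\dim V^G = \Tr(e) = \frac{1}{|G|}\sum_{g\in G}\chi_V(g)$. Moreover, in characteristic $0$ the canonical map $V^G \to V_G$ is an isomorphism (with inverse induced by $e$), so $\dim V_G$ is given by the same formula. Applying this to $V = [\lambda]_{\sym{s}}$ restricted to the dihedral subgroup $D_{2s} \hookrightarrow \sym{s}$ (the action on the cyclically arranged hairs used throughout the section), and using $|D_{2s}| = 2s$, we obtain immediately
$$\dim \bigl([\lambda]_{\sym{s}}\bigr)_{D_{2s}} = \frac{1}{2s}\sum_{g \in D_{2s}} \chi_\lambda(g).$$

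For the twisted case, the plan is to reduce it to the untwisted case by a tensoring trick. Let $\F_\sigma$ denote the one-dimensional $D_{2s}$-representation with character $\sigma$. The twisted $D_{2s}$-action on $[\lambda]_{\sym{s}}$ (where $g$ acts as $\sigma(g)$ times its untwisted action) is, by definition, the tensor product representation $[\lambda]_{\sym{s}} \otimes \F_\sigma$ with its natural untwisted action. Its character at $g$ is $\sigma(g)\chi_\lambda(g)$, and substituting into the formula from the first step gives
$$\dim \bigl([\lambda]_{\sym{s}} \otimes \F_\sigma\bigr)_{D_{2s}} = \frac{1}{2s}\sum_{g \in D_{2s}} \sigma(g)\chi_\lambda(g),$$
which is exactly the claimed formula for the twisted coinvariants.

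There is essentially no obstacle here beyond verifying that $\sigma$ actually defines a one-dimensional character of $D_{2s}$ (i.e., that the assignment $a \mapsto 1$, $b \mapsto -1$ respects the dihedral relations $a^s = b^2 = 1$, $bab = a^{-1}$, which is immediate since $\sigma(a)^s = 1$, $\sigma(b)^2 = 1$, and $\sigma(bab) = \sigma(a) = \sigma(a^{-1})$). Everything else is routine character theory; the only conceptual point worth making is the identification of the twisted action with the tensor product $[\lambda]_{\sym{s}} \otimes \F_\sigma$, which reduces the twisted statement to the untwisted one.
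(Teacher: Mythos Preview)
Your proof is correct and is essentially the same as the paper's: both amount to the standard character-theoretic computation of the multiplicity of the trivial representation in $[\lambda]_{\sym{s}}|_{D_{2s}}$ (resp.\ of $\F_\sigma$ in the twisted case). The paper phrases this via decomposing into irreducible $D_{2s}$-modules and checking $\frac{1}{2s}\sum_g\chi(g)$ vanishes on nontrivial irreducibles, whereas you use the averaging idempotent directly; these are equivalent formulations of the same orthogonality/projection argument.
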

\begin{proof}
Given a character $\chi$ for the dihedral group, define $\int \chi =\frac{1}{2s}\sum_{g\in D_{2s}}\chi(g)$. Consulting the character tables for the dihedral group (see \cite{JL} section 18.3), for each irreducible character $\chi$, we have 
$$\int \chi=\begin{cases}
1&\text{if } \chi\text{ is the character for the trivial representation}\\
0&\text{otherwise}
\end{cases}
$$
So decomposing $[\lambda]_{\sym s}$ as a direct sum of irreducible $D_{2s}$-modules, and writing the character $\chi_{\lambda}$ as a sum of the corresponding dihedral characters, the result follows. The twisted case follows by a similar analysis.
\end{proof}

It is a remarkable fact that for symmetric group elements $\sigma$ with large support, $\chi_\lambda(\sigma)\ll\chi_\lambda(1)$  (see e.g. \cite{roichman,LS}). Since elements of the dihedral group fix at most two points, this implies that the multiplicities of the $D_{2n}$ coinvariants appearing in the previous proposition are approximately $\frac{1}{2n}\chi_\lambda(1)=\frac{1}{2n}\dim([\lambda]_{\sym n})$. For ``most" $\lambda$, we have $\dim [\lambda]_{\sym n}\gg 2n$, and so for such representations $[\lambda]_{\sym n}$ appears in $[H^{\la n\ra}]_{D_{2n}}$ and thus in $\mathsf C_n$.
This heuristic argument can be made precise by examining the actual constants involved in the estimates,  constructing infinite families of nonzero representations.

As an exercise we work out the exact multiplicities in a couple of different cases.

\begin{theorem}\label{thm:p}
Let $p\geq 3$ be prime. 
Let $\alpha_k=\binom{p}{k}-\binom{p}{k-1}$.
If $k>1$ is odd then $[k,p-k]_{\SP}$ appears with multiplicity $\frac{\alpha}{2p}$ in $\mathsf C_p$. If $k=2m$, let $\beta_m=\binom{(p-1)/2}{m}-\binom{(p-1)/2}{m-1}$. Then $[k,p-k]_{\SP}$ appears with multiplicity $\frac{\alpha_{2m}+\beta_m}{2}$ in $\mathsf C_p$.
\end{theorem}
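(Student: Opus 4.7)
The plan is to apply the preceding proposition. Since $s=p$ is odd, the $D_{2p}$-action on $H^{\otimes p}$ is untwisted, so the multiplicity of $[k,p-k]_{\SP}$ in $[H^{\langle p\rangle}]_{D_{2p}}\subset\mathsf{C}_p$ equals the multiplicity of $[p-k,k]_{\sym{p}}$ in its $D_{2p}$-coinvariants, which the previous proposition computes as
\[
\frac{1}{2p}\sum_{g\in D_{2p}}\chi_{(p-k,k)}(g).
\]

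The conjugacy structure of $D_{2p}\subset\sym{p}$ is straightforward: since $p$ is an odd prime, $D_{2p}$ consists of the identity, $p-1$ nontrivial rotations (each a full $p$-cycle), and $p$ reflections (each an involution with exactly one fixed letter and $(p-1)/2$ transpositions). The character sum therefore splits as
\[
\chi_\lambda(\id)+(p-1)\chi_\lambda(\tau)+p\,\chi_\lambda(\sigma),
\]
where $\lambda=(p-k,k)$, $\tau$ is a $p$-cycle, and $\sigma$ has cycle type $(2^{(p-1)/2},1)$. The identity contribution equals $\alpha_k$ by the two-row hook-length formula, and Murnaghan--Nakayama gives $\chi_\lambda(\tau)=0$ whenever $\lambda$ is not a rim hook, which is the case for every partition addressed by the theorem (since both rows of $\lambda$ then have length $\ge 2$).

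The crux is the evaluation of $\chi_\lambda(\sigma)$. I would apply Murnaghan--Nakayama to peel off the unique fixed point, reducing the computation to
\[
\chi^{(p-k,k)}(\sigma)=\chi^{(p-k-1,k)}\!\bigl(2^{(p-1)/2}\bigr)+\chi^{(p-k,k-1)}\!\bigl(2^{(p-1)/2}\bigr),
\]
a sum of two characters of two-row partitions at the pure domino cycle type. Each such character is accessible via the $2$-quotient: the $2$-core of any two-row partition of an even integer is empty and the $2$-quotient is a pair of single-row partitions, so Littlewood's formula reduces the character to a signed binomial coefficient $\pm\binom{n}{r}$ with $n=(p-1)/2$.

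The main obstacle is tracking the signs in the $2$-quotient formulas (equivalently, a careful iteration of Murnaghan--Nakayama on the two-row shapes). A direct bookkeeping shows that when $k=2m+1$ is odd both summands in the display yield $\pm\binom{n}{m}$ with opposite signs, so they cancel and $\chi_\lambda(\sigma)=0$; combined with the vanishing $p$-cycle contribution, the multiplicity collapses to $\alpha_k/(2p)$ as claimed. When $k=2m$ is even, the two summands yield $\binom{n}{m}$ and $\binom{n}{m-1}$ with opposite signs, whose difference is precisely $\beta_m$; reassembling this with the identity contribution produces the stated multiplicity. Sanity checks with small primes (e.g.\ $p=5$, $k=2$ giving $\chi^{(3,2)}(2,2,1)=\chi^{(2,2)}(2,2)+\chi^{(3,1)}(2,2)=2-1=1=\beta_1$, and $p=7$, $k=3$ giving $\chi^{(4,3)}(2^3,1)=3+(-3)=0$) confirm the parity dichotomy before the general sign analysis is attempted.
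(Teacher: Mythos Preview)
Your proposal is correct and follows the same structure as the paper's proof: reduce to the character sum via the preceding proposition, then compute $\chi_{(p-k,k)}$ on the three conjugacy classes of $D_{2p}$ inside $\Sym{p}$ and assemble. The paper simply cites the Frobenius character formula for the three values where you invoke Murnaghan--Nakayama (and the $2$-quotient for the reflection class), but these are interchangeable computations yielding the same character values.
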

\begin{proof}
Given the partition $\lambda=(k,p-k)$, it is easy to calculate $\int \chi$ using the Frobenius character formula. The values of the character on the conjugacy classes $1,a^r,b$ are as follows:
\begin{align*}
\chi_{\lambda}(1)&=\binom{p}{k}-\binom{p}{k-1}\\
\chi_\lambda(a^r)&=\begin{cases}-1&k=1\\0&k\geq 2\end{cases}\\
\chi_{\lambda}(b)&=\begin{cases}
0&k\text{ odd}\\
\binom{(p-1)/2}{m}-\binom{(p-1)/2}{m-1}&k=2m
\end{cases}
\end{align*}
Then $\int\chi_\lambda=\frac{1}{2p}(\chi_{\lambda}(1)+(p-1)\chi_\lambda(a^r)+p\chi_\lambda(b))$, which yields the multiplicities stated in the theorem.

\end{proof}
In the next theorem, we consider order $2p$ where $p$ is prime in order to pick up some even order representations. Again, for simplicity we restrict to $2$ rows. 

\begin{theorem}\label{thm:2p}
Let $p\geq 3$ be prime. 
 For $1< k\leq p$, the representation $[2p-k,k]_{\SP}$ appears in  $\mathsf C_{2p}$ with multiplicity 
$$\frac{1}{4p}\left[\binom{2p}{k}-\binom{2p}{k-1}+(-1)^k(p+1)\binom{p}{m}-p\binom{p-2}{m}+p\binom{p-2}{m-1}+2(p-1)\delta_{p,k}\right],$$
where $m=\lfloor(k/2)\rfloor$, and $\delta_{p,k}$ is equal to $0$ unless $p=k$, in which case it is $1$.
\end{theorem}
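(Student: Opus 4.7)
The plan is to apply the preceding Proposition in its $\sigma$-twisted form (since $s=2p$ is even) to the two-row character $\chi_{(2p-k,k)}$ of $\Sym{2p}$, reducing the asserted multiplicity to
$$\mathrm{mult}=\frac{1}{4p}\sum_{g\in D_{4p}}\sigma(g)\,\chi_{(2p-k,k)}(g).$$
The remaining work is to evaluate $\chi_{(2p-k,k)}$ on each conjugacy class of $D_{4p}$ and assemble the weighted sum.

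First I would enumerate the conjugacy classes of $D_{4p}$: for $p$ an odd prime these are $\{1\}$, the center $\{a^p\}$, the $p-1$ rotation pairs $\{a^{\pm r}\}$ for $r=1,\ldots,p-1$, and two reflection classes of size $p$ each, namely $\{a^{2j}b\}$ and $\{a^{2j+1}b\}$. I would then identify the cycle type in $\Sym{2p}$ of each representative: the identity gives $1^{2p}$; $a^p$ gives $2^p$; $a^r$ for odd $r\in\{1,\ldots,p-1\}$ is a single $2p$-cycle; $a^r$ for even $r$ in that range is a product of two $p$-cycles; and the two reflection classes give cycle types $2^p$ and $1^2\,2^{p-1}$ respectively (determined by the parity of $j$).

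Next I would evaluate $\chi_{(2p-k,k)}$ on each cycle type via the two-row formula
$$\chi_{(n-k,k)}(\sigma)=\bigl([x^k]-[x^{k-1}]\bigr)\prod_{c}\bigl(1+x^{|c|}\bigr),$$
which follows from writing $[n-k,k]_{\Sym n}=\mathrm{Ind}_{\Sym k\times \Sym{n-k}}(\mathrm{triv})-\mathrm{Ind}_{\Sym{k-1}\times \Sym{n-k+1}}(\mathrm{triv})$ and interpreting the induced characters as counts of $\sigma$-invariant $k$-subsets. Substituting the relevant cycle types produces: $\binom{2p}{k}-\binom{2p}{k-1}$ on the identity; $(-1)^k\binom{p}{m}$ on $2^p$; zero on a single $2p$-cycle (valid for $1<k\le p$); zero on $(p,p)$ except when $k=p$, in which case the value is $2$; and a value on $1^2\,2^{p-1}$ extractable from the expansion $(1+x)^2(1+x^2)^{p-1}=(1+x^2)^p+2x(1+x^2)^{p-1}$.

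Finally I would assemble $\sum_g\sigma(g)\chi_{(2p-k,k)}(g)$ using $\sigma(a^r)=1$, $\sigma(a^jb)=-1$, and the conjugacy class cardinalities. The even-$r$ rotation classes collectively contribute only when $k=p$, producing the $2(p-1)\delta_{p,k}$ term. Combining the contributions from $a^p$ and the two reflection classes, then applying Pascal's rule $\binom{p-1}{j}=\binom{p-2}{j}+\binom{p-2}{j-1}$ to rewrite the $\binom{p-1}{\cdot}$ terms as combinations of $\binom{p-2}{m}$ and $\binom{p-2}{m-1}$, should produce the $(-1)^k(p+1)\binom{p}{m}$, $-p\binom{p-2}{m}$ and $p\binom{p-2}{m-1}$ coefficients in the displayed formula; dividing by $4p$ yields the result. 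The main obstacle will be careful bookkeeping of signs: the $(-1)^k$ parity arising from $[x^k]-[x^{k-1}]$ acting on $(1+x^2)^p$-type generating functions must be interleaved with the twist $\sigma(b)=-1$, and one must keep straight which reflection class carries which cycle type in order to land on exactly the coefficients claimed.
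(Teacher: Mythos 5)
Your plan is the same as the paper's: apply the twisted form of the coinvariant proposition, list the conjugacy classes of $D_{2(2p)}$ together with their cycle types in $\Sym{2p}$, evaluate the two-row character on each class (your induced-character/generating-function device is just a repackaging of the Frobenius formula the paper invokes), and assemble the weighted sum. Up to that point everything is sound, and your cycle-type bookkeeping is in fact more careful than the paper's: the reflection class through opposite vertices has type $1^2\,2^{p-1}$, as you say, whereas the paper's proof asserts ``$p-2$ 2-cycles and $2$ fixed points,'' which covers only $2p-2$ points.

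The genuine problem is the last step, which you defer as ``careful bookkeeping'': carrying out your own outline does \emph{not} land on the displayed formula. From $(1+x)^2(1+x^2)^{p-1}=(1+x^2)^p+2x(1+x^2)^{p-1}$ one gets $\chi_\lambda(ab)=\binom{p-1}{m}-\binom{p-1}{m-1}$ for both parities of $k$, while $\chi_\lambda(a^p)=\chi_\lambda(b)=(-1)^k\binom{p}{m}$, so the sum $\chi_\lambda(1)+\chi_\lambda(a^p)+(p-1)\chi_\lambda(a^{\mathrm{odd}})+(p-1)\chi_\lambda(a^{\mathrm{even}})-p\chi_\lambda(b)-p\chi_\lambda(ab)$ yields
$$\frac{1}{4p}\left[\binom{2p}{k}-\binom{2p}{k-1}+(-1)^k(1-p)\binom{p}{m}-p\binom{p-1}{m}+p\binom{p-1}{m-1}+2(p-1)\delta_{p,k}\right],$$
with $(1-p)$ in place of $(p+1)$ and $\binom{p-1}{\cdot}$ in place of $\binom{p-2}{\cdot}$. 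A sanity check shows the printed formula cannot be recovered: for $p=3$, $k=2$ it gives $\frac{1}{12}(9+12-3+3)=\frac{21}{12}$, not an integer, and for $p=3$, $k=3$ it gives $-\frac{3}{12}$; the corrected expression gives $0$ and $1$ respectively, which matches the paper's own computation $[H^{\la 6\ra}]_{D_{12}}\cong[3^2]_{\SP}\oplus 2[41^2]_{\SP}\oplus[321]_{\SP}\oplus[31^3]_{\SP}\oplus[2^21^2]_{\SP}$ (no $[42]_{\SP}$, one copy of $[3^2]_{\SP}$). So your method is the right one, but you must actually carry the computation through and report the corrected multiplicity; promising that sign bookkeeping will reproduce the stated coefficients is the step that fails, since the computation refutes them.
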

\begin{proof}
As in the proof of the previous theorem, we calculate $\frac{1}{2(2p)}\sum_{g\in D_{2(2p)}}\sigma(g)\chi_\lambda(g)$. The conjugacy classes for  $D_{2p}$ and their sizes are written down in in Figure~\ref{chartable}. The dimensions of the $D_{2(2p)}$ coinvariants are then $$\frac{1}{4p}\left(\chi_\lambda(1)+\chi_\lambda(a^p)+(p-1)\chi_\lambda(a^{2r+1})+(p-1)\chi_{\lambda}(a^{2r})-p\chi_\lambda(b)-p\chi_\lambda(ab)\right).$$
 On the symmetric group side, we need to compute $\chi_{\lambda}$ for conjugacy classes of $1,a,a^2,a^p,b,ab$ where $1$ has $2p$ fixed points, $a$ has 1 $2p$-cycle, $a^2$ has 2 $p$-cycles, $a^p$ has $p$ 2-cycles, $b$ has $p$ $2$-cycles and $ab$ has $p-2$ 2-cycles and $2$ fixed points. Using the Frobenius character formula one achieves the values listed in the chart.  
\begin{figure}
\begin{center}
\begin{tabular}{c|cccccc}
elt. of $D_{2(2p)}$&1&$a^p$&$a^r$, $r\text{ odd}$&$a^r, r\text{ even}$&$b$&$ab$\\
\hline
size of conj. class&$1$&$1$&$p-1$&$p-1$&$p$&$p$\\
\hline
$\chi_{[2p-1,1]}$&$2p-1$&$-1$&$-1$&$-1$&$-1$&$1$\\
$\chi_{[2p-2m,2m]}$&$\binom{2p}{2m}-\binom{2p}{2m-1}$&$\binom{p}{m}$&$0$&$0$&$\binom{p}{m}$&$\binom{p-2}{m}-\binom{p-2}{m-1}$\\
$\chi_{[2p-2m-1,2m+1]}$&$\binom{2p}{2m+1}-\binom{2p}{2m}$&$-\binom{p}{m}$&$0$&$0$&$-\binom{p}{m}$&$\binom{p-2}{m}-\binom{p-2}{m-1}$\\
$\chi_{[p,p]}$ &$\binom{2p}{p}-\binom{2p}{p-1}$&$-\binom{p}{m}$&$0$&$2$&$-\binom{p}{m}$&$\binom{p-2}{m}-\binom{p-2}{m-1}$
\end{tabular}
\end{center}
\caption{Characters for $[2p-k,k]_{\mathbb S_p}$ evaluated on conjugacy classes of $D_{2(2p)}$. In the last row, suppose $p=2m+1$.}\label{chartable}
\end{figure}

\end{proof}

\end{document}